\begin{document}

\title{Car Path Tracking in Traffic Flow Networks with Bounded Buffers at Junctions}

\author{Theresa Dambach\footnotemark[1], \; Simone G\"ottlich\footnotemark[1], \; Stephan Knapp\footnotemark[1]}

\footnotetext[1]{University of Mannheim, Department of Mathematics, 68131 Mannheim, Germany (gooettlich@uni-mannheim.de, stknapp@mail.uni-mannheim.de)}

\date{\today}

\maketitle

\begin{abstract}
This article deals with the modeling for an individual car path through a road network, where the dynamics
is driven by a coupled system of ordinary and partial differential equations.
The network is characterized by bounded buffers at junctions that allow for the interpretation of roundabouts or on-ramps
while the traffic dynamics is based on first-order macroscopic equations of Lighthill-Whitham-Richards (LWR) type.   
Trajectories for single drivers are then influenced by the surrounding traffic and can be tracked by appropriate numerical algorithms.  
The computational experiments show how the modeling framework can be used as navigation device. 
\end{abstract}

\noindent
{\bf AMS Classification:} 90B20, 34B45, 35L65, 90B10\\ 
{\bf Keywords:} Traffic flow on networks, finite difference schemes 

\maketitle

\section{Introduction} 

Traffic flow modeling is a current and active research area since mathematical models help to get a deeper insight into the traffic mechanisms. 
Essentially, these models can be divided into microscopic and macroscopic depending on their individual characteristics. Microscopic models consider every vehicle as single agent with different properties specified by its position, speed and acceleration, see e.~g.~\cite{Treiber.2013}. In contrast, macroscopic models include fewer specifications and are only characterized by aggregated variables such as traffic density, flow and average speed, see e.~g.~\cite{Garavello.2016,Garavello.2006}. More precisely, in the case of road networks, the macroscopic dynamics is typically given by hyperbolic conservation laws such that the traffic flow at junctions is distributed according to coupling conditions. Within this framework, also on-ramps or roundabouts can be modeled, cf.~\cite{Piccoli.2006}. In \cite{Goatin.2012,Herty.2009}, a novel type of intersection was introduced considering that junctions may have a storage capacity as buffers. 
The approach is closely related to supply chains or production networks, where goods must wait in a buffer to be processed, see \cite{DApice.2010}. 

Recent publications couple microscopic and macroscopic models to analyze the path of a single vehicle that travels along a road, see \cite{Bretti.2008,DelleMonache.2014,Gottlich.2016}. Unlike strongly-coupled models \cite{DelleMonache.2014}, a weakly-coupled approach \cite{Bretti.2008} assumes that the traffic influences the vehicle but not vice versa when the trajectory of the vehicle is given by an ordinary differential equation. 
In the case of congestion, the right-hand side of this ordinary differential equation becomes discontinuous and we cannot expect a classical solution any more. The underlying theory of such kind of problems is considered in \cite{Colombo.2003b,Colombo.2003}. 

In contrast to already existing models, we provide a formulation to track a car in a network with buffers at junctions, which leads to waiting times whenever a buffer is not empty. At junctions with more than one outgoing road, we introduce different methods to decide which road will be taken next. Moreover, a new numerical approach is presented and the tracking algorithm based on \cite{Bretti.2008} is tailored to a network with buffers. 

The outline of the paper is as follows. Section \ref{sec:Model} is devoted to the introduction of the weakly-coupled network model assuming bounded buffers at junctions. The corresponding numerical analysis as well as a naive and complex tracking algorithm are presented in Section \ref{sec:Analysis}. Section \ref{sec:Results} covers the numerical results. In particular, the performance of the developed algorithms at junctions is numerically investigated in several test networks. Additionally, the tracking algorithms are applied to compare different paths and the resulting arrival time. Conclusions and aspects for future research are outlined in Section \ref{chap:Conclusion}.

\section{Weakly-Coupled Network Model} \label{sec:Model}
We start by presenting the traffic flow model of Herty, Lebacque and Moutari \cite{Herty.2009}. As the speed of a single car is assumed to depend on the traffic density, we refer to the approach in \cite{Colombo.2003b,Colombo.2003}, where only a single road is considered. In Section \ref{sec:Car}, a weakly-coupled model to track the car's path in a network with buffers at junctions is developed.

\subsection{Traffic Flow Model with Bounded Buffers} \label{sec:TFM}
We consider a road network specified by a finite, connected and directed graph $\cG = (\cV,\cE)$ in which the set of nodes $\mathcal{V}$ represents the junctions. The set of edges $\mathcal{E}$ denotes the finite collection of roads modeled by intervals $I_e=[a_e,b_e] \subseteq \R$ of length $L_e=b_e-a_e$ for all $e \in \mathcal{E}$. Further, the functions $A: \cE \rightarrow \cV$ and $\Omega: \cE \rightarrow \cV$ provide the source and target node of an edge. For a given junction $v \in \mathcal{V}$, we call $\delta^+(v)$ the set of all incoming roads and $\delta^-(v)$ of all outgoing roads, respectively.

On each road $e \in \mathcal{E}$, we consider the Lighthill-Whitham and Richards (LWR) model \cite{Lighthill.1955,Richards.1956}
\begin{align*}
	\partial_t \rho_e + \partial_x f(\rho_e) = 0 ,
\end{align*}
where $\rho_e(x,t) \in [0,\rho_{max}]$ denotes the car density and $f$ the corresponding flux. We restrict ourselves to $\rho_{max}=1$ and a flux of the form $f(\rho)=\rho v(\rho)$, which satisfies the following assumption according to \cite{Herty.2009}:
\begin{equation*}
f \in C^2([0,1]), \; f(0) = f(1) = 0  \text{ and } f''<0 \text{ with unique maximum at } \rho=\sigma \in (0,1).
\end{equation*}
In order to describe that a junction $v \in \mathcal{V}$ has limited capacity $r_v^{max}>0$, we introduce a bounded buffer $r_v(t) \in [0,r_v^{max}]$ denoting the total number of cars in the buffer at time $t$.  The buffer load can be modeled by an ordinary differential equation considering the in- and outflow at a junction. We fix appropriate initial density functions $\rho_{e,0}$ together with an initial buffer load $r_{v,0} \in [0,r^{max}_v]$ at time $t_0=0$ and formulate the Cauchy problem at an intersection $v \in \mathcal{V}$ as
\begin{subequations}
\begin{align} 
	&\partial_t \rho_e + \partial_x f(\rho_e) = 0, \label{eq:LWR} \\
	&\rho_e(x,0) = \rho_{e,0}(x),\\
	& r'_v(t) = \sum_{e \in \delta^+(v)} f(\rho_e(b_e,t)) -  \sum_{e \in \delta^-(v)} f(\rho_e(a_e,t)), \label{ODE}\\
	& r_v(0) = r_{v,0}, \label{eq:initialBuffer} 
\end{align} 
\end{subequations}
with $x \in (a_e,b_e),~t>0$ and for all $e \in \delta^+(v) \cup \delta^-(v)$. In \cite{Herty.2009}, the existence of a weak entropy solution at junctions with one incoming and one outgoing road for constant initial data is ensured. Their approach has been extended to allow for an arbitrary number of incoming and outgoing roads and initial density functions of bounded total variation as in \cite{Goatin.2012}. 

The remaining step concerns the exact in- and outflow at junctions. More precisely, we have to prescribe coupling conditions to determine the flux and the corresponding density at both ends of each road $q_e^{in}(t)=f(\rho_e(a_e,t))$ and $q_e^{out}(t)=f(\rho_e(b_e,t))$. To simplify the notation, we use the so-called demand and supply function. For a strictly concave flux function $f:[0,1] \rightarrow \R$ with a unique maximum at $\rho=\sigma$, we call $d : [0,1] \rightarrow \R$ and $s : [0,1] \rightarrow \R$ with
	\begin{align*}
		d(\rho) = \begin{cases}
			f(\rho) & \text{if } ~ \rho \leq \sigma, \\
			f(\sigma) & \text{if } ~ \rho > \sigma
		\end{cases} ~~~~\text{and}~~~~
		s(\rho) = \begin{cases}
			f(\sigma) & \text{if } ~ \rho \leq \sigma, \\
			f(\rho) & \text{if } ~ \rho > \sigma
		\end{cases}
	\end{align*}
	demand and supply function.

\begin{remark}
	The aim is to choose the in- and outflow at a junction as high as possible to maximize the throughput. The demand $d(\rho_e(b_e,t))$ gives the maximum flux that can come from a road $e \in \mathcal{E}$ at time $t$ whereas the supply $s(\rho_e(a_e,t))$ denotes the maximum flux that can enter road $e$.
\end{remark}

\begin{figure}[h]
	\centering
	\begin{minipage}[b]{0.45\textwidth}
		\centering
		\begin{tikzpicture}[
		decoration={
			markings,
			mark=at position 1 with {\arrow[scale=1,black]{latex}};
		},scale=0.7
		]
		\draw [postaction={decorate}] (0,0) -- (2,0);
		\draw  (2.3,0) circle (3mm) node {$v$};
		\draw [postaction={decorate}] (2.6,0) -- (4.6,0.7);
		\draw [postaction={decorate}] (2.6,0) -- (4.6,-0.7);
		
		\coordinate[label=above:$q_1^{out}(t)$] ()  at (1.1,0);
		\coordinate[label=above:$q_2^{in}(t)$] ()  at (3,0.3);
		\coordinate[label=above:$q_3^{in}(t)$] ()  at (3,-1);
	
		\draw [postaction={decorate}] (6+0,0.7) -- (6+2,0);
		\draw [postaction={decorate}] (6+0,-0.7) -- (6+2,0);
		\draw  (6+2.3,0) circle (3mm) node {$v$};
		\draw [postaction={decorate}] (6+2.6,0) -- (6+4.6,0);
		
		\coordinate[label=above:$q_1^{out}(t)$] ()  at (6+1.4,0.3);
		\coordinate[label=above:$q_2^{out}(t)$] ()  at (6+1.4,-1);
		\coordinate[label=above:$q_3^{in}(t)$] ()  at (6+3.2,0);
		\end{tikzpicture}
	\end{minipage}
	\caption{One-to-two junction (left) and two-to-one junction (right)}
	\label{im:junction} 
\end{figure}
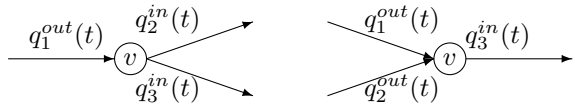 
We only consider networks consisting of dispersing and merging junctions, see Figure \ref{im:junction}. To model the flow at a junction $v$, we also introduce demand and supply functions for the buffer. The demand $d_B$ gives the maximum flux of cars that can leave the buffer and the supply $s_B$ expresses the maximum flux that can enter the buffer.

Further, we assume that the buffer has a constant rate $\mu_v \in \left(0,\max\{ |\delta^+(v)|,|\delta^-(v)| \} f(\sigma)\right)$ at which its load increases and decreases. That means, if the buffer is not completely full, the supply $s_B$ is constant and the maximum flow entering the junction is bounded by $\mu_v$.
Similarly, provided that the buffer is not empty, the demand $d_B$ is constant and the maximum flow leaving the junction cannot exceed $\mu_v$.
\\
\\
\textbf{One-To-Two Junction}
\\
In the case of a dispersing junction, see Figure \ref{im:junction} on the left, distribution rates $\alpha_{2,1} > 0 \text{ and } \alpha_{3,1} > 0$ have to be prescribed. These fixed coefficients are the percentages of drivers coming from the first road and continuing their path on road two or three. Thus, they represent the preferences of drivers. The condition $\alpha_{2,1} + \alpha_{3,1} = 1$ ensures that all drivers are distributed to an outgoing road. 
The demand of the buffer is given by
\begin{align}
	d_B &= \begin{cases}
				\mu_v & \text{if } ~ 0 < r_v(t) \leq r_v^{max}, \\
				\min\{d(\rho_1(b_1,t)),\mu_v\} & \text{if } ~ r_v(t) = 0 \text{.}  \label{eq:a_dB}
		   \end{cases}
\end{align}
The latter is constant and equal to $\mu_v$, if the buffer is not empty and linked to the incoming flux of cars, if $r_v(t)=0$. To include that the maximum number of cars that can leave the junction is distributed to two roads, we restrict $q_2^{in}(t)= f(\rho_2(a_2,t))$ and $q_3^{in}(t)=f(\rho_3(a_3,t))$ by $\alpha_{2,1} d_B$ and $\alpha_{3,1} d_B$ instead of the total possible flux $d_B$, i.e.
\begin{align}
	q_2^{in}(t) &= f(\rho_2(a_2,t)) = \min\{\alpha_{2,1} d_B,s(\rho_2(a_2,t))\}, \label{eq:b_q2}\\
	q_3^{in}(t) &= f(\rho_3(a_3,t)) = \min\{\alpha_{3,1}d_B,s(\rho_3(a_3,t))\} \label{eq:b_q23}\text{.} 
\end{align}
For the buffer supply $s_B$, it must hold that if the buffer is full, i.e. $r_v(t)=r_v^{max}$, the flow entering the junction must not exceed the sum of the two outgoing fluxes or the maximum outflow $\mu_v$. This implies 
\begin{align*}
	s_B &= \min\{ q_2^{in}(t) +  q_3^{in}(t), \mu_v\} \\
	    &= \min\{\alpha_{2,1}\mu_v, s(\rho_2(a_2,t)) \} + \min\{\alpha_{3,1}\mu_v, s(\rho_3(a_3,t)) \} \text{,}
\end{align*}
due to $\alpha_{2,1}\mu_v+\alpha_{3,1}\mu_v = \mu_v$. In the case of an incompletely filled buffer, we assume that the supply is constant and equal to $\mu_v$. These considerations lead to
\begin{align}
	s_B &= \begin{cases}
				\mu_v & \text{if } ~ 0 \leq r_v(t) < r_v^{max}, \\
				\min\{s(\rho_2(a_2,t)),\alpha_{2,1}\mu_v\} + \min\{s(\rho_3(a_3,t)),\alpha_{3,1}\mu_v\} & \text{if } ~ r_v(t) = r_v^{max}.
		   \end{cases} \label{eq:b_sB}
\end{align}
Then, the actual outflow of an incoming road reads as
\begin{align*}
	q_1^{out}(t) &= f(\rho_1(b_1,t)) = \min\{s_B,d(\rho_1(b_1,t))\},
\end{align*}
which means that it is bounded by the supply of the buffer $s_B$ and by the demand of the density on the first road $d(\rho_1(b_1,t))$. The latter guarantees the correct wave speed and the former that the capacity constraints of the buffer are not violated.
~\\
\\
\textbf{Two-To-One Junction}
\\
For a merging junction, see Figure \ref{im:junction} on the right, we need to introduce so-called right-of-way priorities $c_{3,1} > 0,~c_{3,2} > 0$ satisfying $c_{3,1} + c_{3,2} =1$ to guarantee a unique solution at the junction. These parameters are needed to prescribe which cars are allowed to enter whenever the buffer cannot take the total amount caused by restrictions $r_v^{max}$ and $\mu_v$.

The actual outflow towards the buffer depends on the supply of the buffer and on the density at the end of the incoming roads, i.e.
\begin{align}
	q_1^{out}(t) &=f(\rho_1(b_1,t)) = \min\{c_{3,1}s_B,d(\rho_1(b_1,t))\}, \label{eq:c_q1}\\	
	q_2^{out}(t) &=f(\rho_2(b_2,t)) = \min\{c_{3,2}s_B,d(\rho_2(b_2,t))\} \text{,} \label{eq:c_q2}
\end{align}
with buffer supply
\begin{align}
	s_B &= \begin{cases}
				\mu_v & \text{if } ~ 0 \leq r_v(t) < r_v^{max}, \\
				\min\{s(\rho_3(a_3,t)),\mu_v\} & \text{if } ~ r_v(t) = r_v^{max}.
		   \end{cases} 
\end{align}
The actual flux towards road three is given by
\begin{align}
	q_3^{in}(t) &= f(\rho_3(a_3,t)) = \min\{d_B,s(\rho_3(a_3,t))\}. \label{eq:c_q3} 
\end{align}
The demand of the buffer $d_B$, which is the maximum flux that can leave the buffer, is bounded by the two incoming fluxes and $\mu_v$. If $r_v(t)=0$, we obtain $s_B=\mu_v$ and
\begin{align*}
	d_B &= \min \{ q_1^{out}(t) + q_2^{out}(t), \mu_v\} \\
    	&= \min \{c_{3,1}\mu_v,d(\rho_1(b_1,t)) \} + \min\{c_{3,2}\mu_v,d(\rho_2(b_2,t))\}, 
\end{align*}
by using $c_{3,1}\mu_v+c_{3,2}\mu_v = \mu_v$. Together with a constant flow of $\mu_v$, if the buffer is not empty, we finally choose
\begin{align}
	d_B^{Stand} &= \begin{cases}
						\mu_v & \text{if } ~ 0 < r_v(t) \leq r_v^{max}, \\
						\min\{d(\rho_1(b_1,t)),c_{3,1}\mu_v\} + \min\{d(\rho_2(b_2,t)),c_{3,2}\mu_v\} & \text{if } ~ r_v(t) = 0 \text{.} \\
				   \end{cases} \label{eq:c_dB}	
\end{align}
Note that in \cite{Herty.2009} the demand of the buffer is said to be 
\begin{align}
	d_B^{Herty} &= \begin{cases}
						\mu_v & \text{if } ~ 0 < r_v(t) \leq r_v^{max}, \\
						\min\{d(\rho_1(b_1,t)) + d(\rho_2(b_2,t)),\mu_v\} & \text{if } ~ r_v(t) = 0 \text{.} \\
				   \end{cases} \label{eq:c_dB_Paper}
\end{align}
The latter does not ensure a non-negative buffer load $r_v(t)$ if $c_{3,1}\mu_v \gtrless d(\rho_1(b_1,t))$, $c_{3,2}\mu_v \lessgtr d(\rho_2(b_2,t))$ and $s(\rho_3(a_3,t))$ sufficiently large since $d_B$ does not depend on the actual inflow to the buffer but only on the number of cars arriving at the end of the road. To illustrate this problem, we study an example in detail. 
\begin{example} \label{ex:dB}
	Let us assume $f(\rho)=\rho(1-\rho)$, $\mu_v = 0.2$, $c_{3,1} = c_{3,2} = 0.5$ and the initial data $\rho_{1,0} = 0.4,~\rho_{2,0} = 0.1,~\rho_{3,0} = 0.5,~r_{v,0} = 0$. According to \eqref{eq:c_q1}-\eqref{eq:c_q3} and \eqref{eq:c_dB_Paper}, we obtain the fluxes
	\begin{align*}
		q_1^{out}(t) &= \min\{c_{3,1}s_B,d(\rho_{1,0})\} = \min \{ 0.5\cdot 0.2 , 0.24 \}  = 0.1,\\
		q_2^{out}(t) &= \min\{c_{3,2}s_B,d(\rho_{2,0})\} = \min \{ 0.5\cdot 0.2 , 0.09 \} = 0.09,\\
		q_3^{in}(t)  &= \min\{d_B^{Herty},s(\rho_{3,0})\} = \min \{ 0.2,0.25 \} = 0.2,
	\end{align*}
	which lead to a negative buffer load for $r_{v,0} = 0$, i.e.
	\begin{align*}
		r_v'(t) = q_1^{out}(t) + q_2^{out}(t) - q_3^{in}(t) = - 0.01.
	\end{align*}
	In this case, the choice \eqref{eq:c_dB} ensures that the inflow to the buffer equals the outflow with
	\begin{align*}
		q_3^{in}(t) = \min\{d_B^{Stand},s(\rho_{3,0})\} = \min \{ 0.19,0.25 \} = 0.19.
	\end{align*}
	Consequently, $r_v'(t)=0$ and the buffer remains empty as expected. The following lemma provides the connection between the two definitions of $d_B$.
\end{example}

\begin{lemma} \label{lemma:dB}
	Assuming $d(\rho_1(b_1,t)) \neq 0$ and $d(\rho_2(b_2,t)) \neq 0$ as well as time-dependent right-of-way priorities defined by
	\begin{align}
		c_{3,1}(t) = \frac{d(\rho_1(b_1,t))}{d(\rho_1(b_1,t)) + d(\rho_2(b_2,t))}, ~~~~
		c_{3,2}(t) = \frac{d(\rho_2(b_2,t))}{d(\rho_1(b_1,t)) + d(\rho_2(b_2,t))}. \label{eq:c1c2}
	\end{align}
	Then, $d_B^{Herty}$ and $d_B^{Stand}$ provide the same value for the demand of the buffer at a junction with two incoming and one outgoing road.
\end{lemma}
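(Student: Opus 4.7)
The plan is straightforward: the two definitions already agree whenever $r_v(t) > 0$ (both equal $\mu_v$), so I only need to verify equality in the remaining case $r_v(t) = 0$. I would introduce the shorthand $D_1 = d(\rho_1(b_1,t))$, $D_2 = d(\rho_2(b_2,t))$, and $D = D_1 + D_2$, noting that by hypothesis $D_1,D_2 > 0$, so $D > 0$ and the fractions in \eqref{eq:c1c2} are well-defined and satisfy $c_{3,1}(t) D = D_1$, $c_{3,2}(t) D = D_2$, $c_{3,1}(t) + c_{3,2}(t) = 1$.

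Next, I would substitute the priorities \eqref{eq:c1c2} into the $r_v(t)=0$ branch of $d_B^{Stand}$ in \eqref{eq:c_dB} and show that
\begin{equation*}
\min\{D_1, c_{3,1}(t)\mu_v\} + \min\{D_2, c_{3,2}(t)\mu_v\} = \min\{D,\mu_v\},
\end{equation*}
which is precisely the $r_v(t)=0$ branch of $d_B^{Herty}$ in \eqref{eq:c_dB_Paper}. To do this, I would distinguish the two cases $D \leq \mu_v$ and $D > \mu_v$. In the first case, $c_{3,i}(t)\mu_v = D_i \mu_v/D \geq D_i$ for $i=1,2$, so each minimum picks the $D_i$ argument and the sum equals $D = \min\{D,\mu_v\}$. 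In the second case, $c_{3,i}(t)\mu_v = D_i \mu_v/D \leq D_i$, so each minimum picks the $c_{3,i}(t)\mu_v$ argument and the sum equals $(c_{3,1}(t)+c_{3,2}(t))\mu_v = \mu_v = \min\{D,\mu_v\}$.

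There is no real obstacle here beyond a clean case split; the assumption $D_1, D_2 \neq 0$ is used only to ensure that the priorities in \eqref{eq:c1c2} are defined, and the convex-combination identity $c_{3,1}(t) + c_{3,2}(t) = 1$ does the rest of the work in the $D > \mu_v$ case.
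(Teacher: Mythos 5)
Your proof is correct and rests on the same key observation as the paper's: with the priorities \eqref{eq:c1c2}, the comparison $c_{3,i}(t)\mu_v \lessgtr d(\rho_i(b_i,t))$ reduces, for both $i=1,2$, to the single comparison $\mu_v \lessgtr d(\rho_1(b_1,t))+d(\rho_2(b_2,t))$, so the two minima in $d_B^{Stand}$ always resolve in the same direction. The paper packages this as a contradiction argument ruling out the ``mixed'' sign patterns and leaves the resulting equality of the two formulas implicit, whereas your direct case split on $D\lessgtr\mu_v$ also verifies that equality explicitly --- a slightly more self-contained write-up of the same idea.
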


\begin{proof}
	In general, both definitions differ only if $r_v(t) =0$, $c_{3,1}(t)\mu_v \gtrless d(\rho_1(b_1,t))$ and $c_{3,2}(t)\mu_v \lessgtr d(\rho_2(b_2,t))$. Choosing the right-of-way priorities according to \eqref{eq:c1c2} excludes these cases, which means that only $c_{3,1}(t)\mu_v \gtrless d(\rho_1(b_1,t))$ and $c_{3,2}(t)\mu_v \gtrless d(\rho_2(b_2,t))$ is possible. To see this, we assume
	$c_{3,1}(t) \mu_v > d(\rho_1(b_1,t))$ and $c_{3,2}(t) \mu_v < d_2(\rho_2(b_2,t))$ which leads to
	\begin{align*}
		c_{3,1}(t) \mu_v > d(\rho_1(b_1,t))  ~~ & \Leftrightarrow ~~ 0 > d(\rho_1(b_1,t)) \cdot \big( d(\rho_1(b_1,t)) + d(\rho_2(b_2,t)) - \mu_v \big) \\
		                                        & \Leftrightarrow ~~ 0 > d(\rho_1(b_1,t)) + d(\rho_2(b_2,t)) - \mu_v
	\end{align*}
	and
	\begin{align*}
		c_{3,2}(t) \mu_v < d(\rho_2(b_2,t)) ~~ & \Leftrightarrow ~~	0 < d(\rho_2(b_2,t)  \cdot \big( d(\rho_1(b_1,t)) + d(\rho_2(b_2,t)) - \mu_v \big) \\
		                                       & \Leftrightarrow ~~ 0 < d(\rho_1(b_1,t)) + d(\rho_2(b_2,t)) - \mu_v.
	\end{align*}
	Since $d(\rho_1(b_1,t)) >0$ and $ d(\rho_2(b_2,t)) > 0$, we obtain a contradiction. The same reasoning applies to the case $c_{3,1}(t) \mu_v < d(\rho_1(b_1,t))$ and $c_{3,2}(t) \mu_v > d_2(\rho_2(b_2,t))$.
\end{proof}
~\\
\textbf{In- and Outflow}
\\
In a traffic network, we usually introduce some nodes $v \in \mathcal{\tilde{V}},~ \mathcal{\tilde{V}} \subset \mathcal{V}$, with either only one incoming edge or only one outgoing edge. We call edges that are connected to one of these nodes, incoming or outgoing roads, since the cars enter or leave the network at these points. For nodes with only one outgoing edge, we prescribe desired inflow functions $f_v^{in}(t)$. Additionally, we assume unlimited buffers $r_v(t)$ at these nodes, which means $r_v^{max}=\infty$. The load of this buffer will increase when the desired inflow is higher than the capacity of the following road at that time or higher than the maximum possible outflow $\mu_v$. Hence, the inflow towards the first road is restricted to a reasonable value and even if the network is completely blocked, an admissible solution can be obtained. Similar to the buffers inside the network, the load decreases with rate $\mu_v$ when the traffic is less dense and the road is able to absorb a higher flux than $f^{in}_v(t)$. The actual inflow and the demand of the buffer are set as
\begin{align}
	q_e^{in} (t) &= \min\{\tilde{d}_B,s(\rho_e(a_e,t))\}, \label{eq:inflow_q}\\
	\tilde{d}_B &= \begin{cases}
						\mu_v & \text{if } ~ r_v(t) > 0, \\
						\min \{ f_v^{in}(t), \mu_v \} & \text{if } ~ r_v(t)=0.
				   \end{cases} \label{eq:inflow_dB}
\end{align}
Since $f_v^{in}$ is the flux towards the buffer, the latter is given by 
\begin{align*}
	r_v'(t) = f_v^{in}(t) - q_e^{in}(t),~ e \in \delta^-(v) \text{.}
\end{align*}
On outgoing roads, we introduce the absorbing condition
\begin{align}
	q_e^{out} (t) = f(\rho_e(b_e,t)), \label{eq:outflow_f}
\end{align}
which prevents that backwards traveling waves emerge at outgoing nodes. In this case, the flux is not maximized at the end of the edge if $\rho_e(b_e,t)>\sigma$.

\subsection{Car Path} \label{sec:Car}
In the following, we extend the traffic flow model by a microscopic equation to track the path of a single car. 
We develop the mathematical formulation before discussing the choice of the next road in a network based on the car's scope of information about the current traffic situation.

\subsubsection{Model Equations} \label{sec:CarModel}

To track the position $x(t)$ of a single car on every road $e \in \cE$ that the car passes on its way from an initial position to its destination, we need a set of initial value problems
\begin{subequations}
\begin{align}
	\dot{x}(t) &= \omega(\rho_e(x,t)),~ x \in [x_e^*,b_e],~t > t_e^*, \label{eq:CP_net1} \\ 
	x(t^*_e)   &= x^*_e, \label{eq:CP_net2}
\end{align}
\end{subequations}
with an appropriate speed function $\omega(\rho)$ and initial data $(t^*_e,x^*_e)$, which means that the car is located at $x_e^* \in [a_e,b_e]$ on road $e$ at time $t_e^*$. The main challenge is the discontinuous right-hand side of equation \eqref{eq:CP_net1}. In case of congestion, which is described by a shock wave in the LWR model, the speed of the car $\omega(\rho)$ is not continuous. Consequently, the Picard-Lindel\"of theorem cannot be applied to prove existence and uniqueness of a solution. Therefore, in \cite{Colombo.2003b,Colombo.2003} the wider class of Filippov solutions \cite{Filippov.1988} are considered. Such a solution is defined at a discontinuity by the derivative and a solution is not longer required to be continuously differentiable but an absolutely continuous function. For $\omega(\rho)=v(\rho)$, the existence and uniqueness of a Filippov solution have been proven in \cite{Colombo.2003b,Colombo.2003}.

So far,  the car's behavior at junctions is not obvious and we do not know how to choose the initial data $(t^*_e,x^*_e)$ properly. Therefore, we take a closer look at the travel time on a road $e \in \mathcal{E}$ and the waiting time at a junction $v$ with $e \in \delta^+(v)$. The definition of the travel time is based on the formulation in \cite{Gottlich.2016}. Figure \ref{im:Notation} shows an illustration of the notation that will be utilized in the following.

\begin{figure}[h]
	\centering
	\begin{tikzpicture}[
	decoration={
		markings,
		mark=at position 1 with {\arrow[scale=1,black]{latex}};
	},xscale=1.1
	]
	\draw [postaction={decorate}] (0,0) -- (4,0) node [midway,above] {$e$};
	\draw  (4.3,0) circle (3mm) node {$v$};
	\draw [postaction={decorate}] (4.6,0) -- (8.6,0) node [midway,above] {$\tilde{e}$};
	
	\draw[->] (0,-0.7) -- (9,-0.7);
	\coordinate[label=below:$t$] ()  at (8.8,-0.7);
	
	\draw (0,-0.75) -- (0,-0.65);
	\coordinate[label=below:$t_e^*$] ()  at (0,-0.7);
	
	\draw (4,-0.75) -- (4,-0.65);
	\coordinate[label=below:$at_e(t_e^*)$] ()  at (3.5,-0.7);
	
	\draw (4.6,-0.75) -- (4.6,-0.65);
	\coordinate[label=below:$t_{\tilde{e}}^*$] ()  at (4.7,-0.7);

	\draw[decorate,decoration ={brace}, yshift=-4ex] (4,-0.7) -- node[below=0.4ex] {$tt_e(t_e^*)$} (0,-0.7);
	
	\draw[decorate,decoration ={brace}, yshift=-4ex] (4.6,-0.7) -- node[below=0.4ex] {$~~~wt_v(at_e(t_e^*))$} (4,-0.7);
	\end{tikzpicture}
	\caption{Notation of starting, travel, arrival and waiting time}
	\label{im:Notation}
\end{figure}
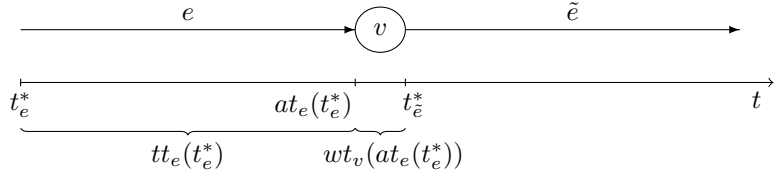

Let us denote by $t_e^* \in [0,T]$ the time at which the car is located at $x_e^* \in [a_e,b_e]$ on a road $e\in \cE$. To simplify the following considerations, we assume without loss of generality that $x_e^*=a_e$. This means that the car is located at the beginning of the considered road, i.e. $x(t_e^*) = x_e^* = a_e$. Hence, the required travel time $tt_e(t_e^*)$ on this road $e$ can be described by the arrival time at the end of the road. Note that the travel time depends on the starting time $t_e^*$ since the traffic density on the road varies over time. Let $\tilde{t}_e$ be defined such that $x(\tilde{t}_e) = b_e$. Then, $\tilde{t}_e$ is strictly greater than $t_e^*$ since all roads have positive length and $x(t)$ is continuous. Additionally, $\tilde{t}_e \leq T$ holds, if the car reaches the end of the road within the time horizon $T$, which means
\begin{align}
	\int_{t_e^*}^{T} \omega(\rho_e(x(t),t)) ~dt \geq b_e-a_e. \label{eq:CP_exceedtt}
\end{align}
Due to the non-negative speed $\omega$, $x(t)$ is non-decreasing. However, it is possible that the car has zero speed in a traffic jam. Therefore, there might be more than one $\tilde{t}_e$ satisfying $x(\tilde{t}_e)=b_e$. For this reason, we define the arrival time at the end of road $e$ by $at_e(t_e^*) = \min \{ t \in ~(t_e^*,T] : x(t)=b_e \}$.
Due to this consideration, the travel time on road $e$ is given by $tt_e(t_e^*) = at_e(t_e^*) - t_e^*$.

Now, we turn to the waiting time at junctions. The cars are assumed to behave according to the First-In-First-Out (FIFO) principle. The idea behind this concept is that if the buffer is non-empty, the considered car has to wait until all cars, which had arrived earlier, have left the buffer. If the buffer load at the junction $v$ is empty when the car arrives at the end of road $e$, which means $r_v(at_e(t_e^*)) = 0$,
the car leaves immediately and the waiting time at node $v$ is equal to zero, $wt_v(at_e(t_e^*)) = 0$.
Since the buffer has no spatial dimension, the end of an incoming road $e$ is the beginning of an outgoing road $\tilde{e}$
\begin{align*}
	b_e = a_{\tilde{e}},~ \text{for all} ~e \in \delta^+(v),~\tilde{e} \in \delta^-(v).
\end{align*}
For this reason, the starting time $t_{\tilde{e}}^*$ on the next road is equal to the arrival time at the end of the previous road $t_{\tilde{e}}^* = at_e(t_e^*)$ and the car must be located at the beginning of the new road $x(t_{\tilde{e}}^*) = b_e = a_{\tilde{e}}$. Note that the road $\tilde{e}$ is only uniquely determined if the junction has only one outgoing road. In case of a dispersing junction, we will need a criteria to decide which road is taken by the driver. This issue is dealt with in the next subsection.

If the buffer is non-empty, i.e. $r_v(at_e(t_e^*)) > 0$, the considered car has to wait and the starting time $t_{\tilde{e}}^*$ on the new road $\tilde{e}$ is defined by the buffer load at the arrival time $at_e(t_e^*)$ and the outflow $f_v^{out}(t)$ while the car is inside the buffer. In fact, we have to solve
\begin{align*}
	r_v(at_e(t_e^*)) = \int_{at_e(t_e^*)}^{t_{\tilde{e}}^*} f^{out}_v(t) ~dt. 
\end{align*}
In the same manner as before, we can argue that $t_{\tilde{e}}^* \in (at_e(t_e^*),T]$ exists if the total outflow up to $T$ exceeds the buffer load at time $at_e(t_e^*)$, i.e.
\begin{align}
	r(at_e(t_e^*)) \leq \int_{at_e(t_e^*)}^{T} f^{out}_v(t) ~dt. \label{eq:CP_exceedwt}
\end{align}
Note that $f^{out}_v(t)$ denotes the total outflow from node $v$ towards all outgoing roads $e \in \delta^-(v)$, i.e.
\begin{align*}
	f^{out}_v(t) = \sum_{e \in \delta^-(v)} q_e^{in}(t).
\end{align*}
This function is non-negative and we are searching a $t_{\tilde{e}}^*$ that satisfies
\begin{align*}
	t_{\tilde{e}}^* = \min \Big\{ t \in (at_e(t_e^*),T] : r(at_e(t_e^*)) = \int_{at_e(t_e^*)}^{t} f^{out}_v(s) ~ds  \Big\}.
\end{align*}
Then, the waiting time is given by $wt_v(at_e(t_e^*)) = t_{\tilde{e}}^* - at_e(t_e^*)$. For all $t \in [at_e(t_e^*),t_{\tilde{e}}^*]$, the car remains at the end of the edge and $x(t) = b_e=a_{\tilde{e}}$.

Now, we are able to define appropriate initial data $(t_e^*,x_e^*)$ for the set of initial value problems \eqref{eq:CP_net1}-\eqref{eq:CP_net2} depending on the travel and waiting times. To achieve this, we prescribe the position of the car's departure $x_{e^*}^* \in [a_{e^*},b_{e^*}]$ on an initial edge $e^* \in \cE$ at initial time $t_{e^*}^* \in [0,T]$. 
If the considered time horizon $T$ is sufficiently large, the initial data for the remaining ODEs can be determined iteratively. At every junction $v$ with incoming edge $e \in \delta^+(v)$ and outgoing edge $\tilde{e} \in \delta^-(v)$, which are part of the car's path, we set
\begin{align*}
	t_{\tilde{e}}^* &= at_e(t_e^*) + wt_v(at_e(t_e^*)), \\
	x_{\tilde{e}}^* &= a_{\tilde{e}}.
\end{align*}

\subsubsection{Choice of the Next Road} \label{sec:CarChoice}
Whenever a car arrives at a dispersing junction, the driver decides which outgoing road will be taken next. One possibility is the use of the road being part of the shortest path to the destination in terms of total road length. Another approach might consider travel and waiting times to obtain the fastest path. In the following, we apply Dijkstra's algorithm \cite{Dijkstra.1959}, where the definition of time-varying weights is the crucial point. 
\\
\\
\textbf{Complete Information}
\\
In a first situation, the car has complete information on the traffic and on its own properties including the initial position, the destination and the departure time. Clearly, following the shortest path generally does not correspond to the earliest arrival time at the destination. For this reason, we include the travel and waiting times defined previously to find the fastest path. In \cite{Cooke.1966}, Cook and Harley first dealt with the time-dependent shortest path problem. In general, a path can be written by a finite sequence of nodes and edges $(v_0,e_1,v_1,...,e_k,v_k)$ such that $v_0,...,v_k \in \cV$ and $e_1,...,e_k \in \cE$ with $A(e_i)=v_{i-1}$ and $\Omega(e_i)=v_i$ for $i=1,...,k$. The set of all paths from a source $s \in \cV$ to a destination $d \in \cV$ is denoted by $\cP(s,d)$. For a given path $P=(s,e_1,v_1,...,e_k,d) \in \cP(s,d)$ and a departure time $t \geq 0$, the path arrival time $at_P(t)$ is defined by the sum of the waiting time at nodes and the travel time on edges, i.e.
\begin{align*}
	at_P(t) = & t + wt_s(t) + tt_{e_1}(t+wt_s(t)) \\
	&+ wt_{v_1}(t+wt_s(t)+tt_{e_1}(t+wt_s(t))) + ... + tt_{e_k}(...).
\end{align*}
Then, our aim is to find the earliest path arrival time such that
\begin{align*}
	eat_{s,d}(t) = \min \{ at_P(t): P \in \cP(s,d) \}.
\end{align*}
The following lemma enables to apply classical methods to find the earliest arrival time path from the theory of static networks as described in \cite{Dreyfus.1969}. 
\begin{lemma}
	The network of the presented coupled model together with the definition of the total travel time on an edge 
	\begin{align}
	ttt_e(t) = wt_v(t) + tt_e(wt_v(t)),~e \in \delta^-(v). \label{eq:CP_ttt}
	\end{align} satisfies the condition of a FIFO network. 
\end{lemma}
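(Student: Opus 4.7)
The goal is to verify the FIFO (First-In-First-Out) property for the network: if two drivers arrive at a junction $v$ at times $t_1\le t_2$ and both take the same outgoing edge $e\in\delta^-(v)$, then their arrival times at the far end satisfy $t_1+ttt_e(t_1)\le t_2+ttt_e(t_2)$. Since $ttt_e$ is the sum of a waiting component $wt_v$ and a pure travel component $tt_e$ composed with the post-wait departure time, my plan is to establish FIFO separately for each ingredient and then chain the two monotonicity statements.

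First, I would handle the waiting time at the buffered node. By construction the buffer is processed according to the FIFO principle explicitly assumed in Section~\ref{sec:Car}, and the departure time $t^*_{\tilde e}$ of a car arriving at time $t$ is the smallest $s>t$ for which $\int_t^{s} f^{out}_v(u)\,du$ equals the buffer load at time $t$. Using $f^{out}_v\ge 0$ and the fact that two cars in the buffer obey FIFO, I would argue that the car arriving at $t_1$ leaves no later than the car arriving at $t_2\ge t_1$. Formally, if $t_1^*$ and $t_2^*$ are the two exit times, then between $t_2$ and $t_2^*$ the later car must wait out at least every car that entered before it, including the earlier one, so $t_1^*\le t_2^*$. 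Translating this back gives $t_1+wt_v(t_1)\le t_2+wt_v(t_2)$.

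Second, I would show that pure travel on an edge is FIFO: for two departure times $s_1\le s_2$ from $x=a_e$ we have $s_1+tt_e(s_1)\le s_2+tt_e(s_2)$. Let $x_1(\cdot)$ and $x_2(\cdot)$ denote the corresponding Filippov solutions of \eqref{eq:CP_net1}--\eqref{eq:CP_net2}. Assume for contradiction that $x_2$ overtakes $x_1$, i.e.\ there exists $t^\dagger$ with $x_2(t^\dagger)=x_1(t^\dagger)$ and $x_2>x_1$ on an interval to the right. Starting from $t^\dagger$, both trajectories are Filippov solutions of the same ODE with the same initial value, and by the uniqueness result of \cite{Colombo.2003b,Colombo.2003} for $\omega=v$, they must coincide on a right neighborhood of $t^\dagger$, contradicting the assumed overtaking. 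Hence $x_1(t)\ge x_2(t)$ for all relevant $t$, so $x_1$ reaches $b_e$ no later than $x_2$, which is precisely the FIFO inequality for $tt_e$.

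Finally I would combine the two steps. For $t_1\le t_2$, Step~1 gives $s_i:=t_i+wt_v(t_i)$ with $s_1\le s_2$, and Step~2 then gives $s_1+tt_e(s_1)\le s_2+tt_e(s_2)$, which is the claimed inequality using the definition \eqref{eq:CP_ttt} of $ttt_e$. The main obstacle, and the step to treat most carefully, is Step~2: one must appeal to the no-crossing property of Filippov trajectories rather than classical ODE uniqueness, since $\omega(\rho_e(x,t))$ is discontinuous across shocks; aside from this, everything else reduces to the FIFO discipline already built into the buffer dynamics.
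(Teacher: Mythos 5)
Your proposal is correct and follows essentially the same route as the paper: FIFO in the buffer from the ordered queue, no overtaking on the road via a crossing-point argument, and the concatenation of the two. The only (welcome) refinement is that you justify the no-overtaking step by the uniqueness of Filippov solutions from \cite{Colombo.2003b,Colombo.2003}, whereas the paper argues more informally that the two cars have equal speed at the intersection point and hence arrive together.
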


\begin{proof}
	We have to show that for all edges $e \in \cE$ and $t < \tilde{t}$,
	\begin{align*}
	t+ttt_e(t) \leq \tilde{t} + ttt_e(\tilde{t})
	\end{align*}
	is valid. We begin by considering two cars starting at $t$ and $\tilde{t}$ with $t < \tilde{t}$. Additionally, we assume that the car which started later arrives earlier at the end of the road, that means $t + ttt_e(t) > \tilde{t} + ttt_e(\tilde{t})$. Then, this car needs to overtake the other car at some point. 
	
	While the car is inside the buffer, this is not possible since the car has to wait in an ordered queue until the cars in front of it have left the buffer. Thus, $t+wt_v(t) \leq \tilde{t}+wt_v(\tilde{t})$ and overtaking must happen on the road. 
	Then, there must exist a point $(x^o,t^o)$ with $x^o \in [a_e,b_e]$ and $t^o \geq \tilde{t}+wt_v(\tilde{t})$. At this point, the speed of both cars is equal to $\omega(\rho_e(x^o,t^o))$. Consequently, they travel at the same speed until the end of the road and arrive at the same time. We obtain $t + ttt_e(t) = \tilde{t} + ttt_e(\tilde{t})$, which leads to a contradiction.	
\end{proof}%

The consideration of FIFO networks has the advantage that the concatenation property is valid. Hence, it is sufficient to assign only one label to each node representing the earliest arrival. Then, these labels can be successively updated similar to Dijkstra's algorithm using the total travel times instead of distances between the nodes.
~\\
\\
\textbf{Departure Time Not Available}
\\
We study the situation in which we know the aggregated densities and buffer loads over time but the exact departure time of the car is unknown. Therefore, we use an aggregated measure to determine a path instead of requiring exact waiting and travel time functions. This approach has its origin in the field of optimization of traffic flow models \cite{Garavello.2016,Garavello.2006}. A possible target can be the minimization of the travel time for all cars in the network within a given time horizon as studied in \cite{Treiber.2013}.

First, we consider the total number of cars on an edge $e$ on the complete time horizon $[0,T]$, which reads
\begin{align*}
	\int_0^T \int_{I_e} \rho_e(x,t) ~dx dt.
\end{align*}
We know that the higher this quantity, the higher the total time spent by cars traveling along this road. Thus, we obtain a measure corresponding to the travel time. 
Next, we must include the waiting time at nodes in some way. The buffer load over the interval $[0,T]$ is given by
\begin{align*}
	\int_0^T r_v(t) ~dt
\end{align*}
and provides an analogy to the waiting time. To compare these two measures, we normalize the quantities by their maximum value and obtain
\begin{align*}
	\lambda^{\rho}_e &= \frac{1}{T \max_{e \in \cE} \{L_e\}} \int_0^T \int_{I_e} \rho_e(x,t) ~dx dt, \\
	\lambda^{r}_v &= \frac{1}{T \max_{v \in \cV \backslash \mathcal{\tilde{V}}} \{r^{max}_v\} } \int_0^T r_v(t) ~dt.
\end{align*}
Now, we combine these single elements and define for parameters $w^{\rho},w^{r} \in [0,1]$ with $w^{\rho} + w^{r} = 1$ the weights $\lambda_e$ on each edge $e \in \cE$,
\begin{align}
	\lambda_e &= w^{\rho} \lambda^{\rho}_e + w^{r} \lambda^{r}_v \nonumber \\ 
			  &= \frac{w^{\rho}}{T \max_{e \in \cE} \{L_e\}} \int_0^T \int_{I_e} \rho_e(x,t) ~dx dt + \frac{w^{r}}{T r^{max}} \int_0^T r_v(t) ~dt  ,~~~ e\in \delta^-(v). \label{eq:Weights}
\end{align} 
These time-independent weights enable to apply static methods, as for instance Dijkstra's algorithm, to find a path with minimum weights from the target node of the initial edge $s=\Omega(e^*)$ to the destination $d$. Instead of using the distance of each edge, we use these weights to find a path. Then, the car chooses the next edge in $P$ at every junction.
\\
\\
\textbf{Departure Time and Current Information Available}
\\
In general, having full information about the traffic is a strong assumption. Therefore, it might be reasonable to update the path of the car during its journey based on the current information. More precisely, at every dispersing junction, we restart a fastest path algorithm from the current node using only the information available at this time point. Hence, the waiting and travel times are not given as input functions, but only for the fixed and current times $\bar{t}$ when the car is at a dispersing junction. Obviously, we cannot expect to obtain the exact travel and waiting times due to the lack of information, but the updating procedure provides the opportunity to change the path when the traffic on a road is suddenly more dense than somewhere else. 

In contrast to the previous approach, we consider the total number of cars on the edge $e$ only at a certain time $\bar{t}$, i.e.
\begin{align*}
	\int_{I_e} \rho_e(x,\bar{t}) ~dx.
\end{align*}
Likewise, the buffer load at this time is $r_v(\bar{t})$. Again, we normalize and obtain the values
\begin{align*}
	\lambda^{\rho}_e(\bar{t}) = \frac{1}{\max_{e \in \cE} \{L_e\}} \int_{I_e} \rho_e(x,\bar{t}) ~dx ~~~\text{and}~~~ \lambda^{r}_v(\bar{t}) = \frac{r_v(\bar{t})}{r^{max}}.
\end{align*}
Together with parameters $w^{\rho},w^{r} \in [0,1]$ satisfying $w^{\rho} + w^{r} = 1$. Then, the weights of the Dijkstra algorithm on each edge $e \in \cE$ read as
\begin{align}
	\lambda_e(\bar{t}) = w^{\rho} \lambda^{\rho}_e(\bar{t}) + w^{r} \lambda^{r}_v(\bar{t}),~ e\in \delta^-(v). \label{eq:WeightsCurr}
\end{align}
Doing so, we can apply Dijkstra's algorithm to find a path from the current node to the destination that is followed by the car up to the next dispersing junction.

\section{Numerical Discretization and Tracking Algorithms}\label{sec:Analysis}

Before discussing the discretization of the car path trajectory, an appropriate scheme to approximate the solution of the traffic flow problem is presented. First, we introduce a spatial grid $a_e = x_{e,0} < x_{e,1} < \ldots < x_{e,N_e}=b_e$ of $N_e{+}1$, $N_e \in \N$, points on all roads of length $L_e=b_e-a_e$, $e\in \cE$ with step size $h_e = x_{e,i} - x_{e,i-1}, ~\text{for all}~i \in \{1,\ldots,N_e\}$. Additionally, we use $M{+}1$, $M \in \N$, time points $t^n$ to discretize the time interval $[0,T]$, with time horizon $T>0$, such that $0 = t^0 < t^1 < ... < t^M =T$.
To achieve numerical convergence, the temporal step size $\tau$ has to be linked to the spatial step size $h_e$ by the CFL-condition. 
On a road network, $h_e$ can vary due to different road lengths. To define coupling conditions at junctions, we need a uniform time discretization for all roads. For the flux function $f(\rho)=1-\rho$, the choice
\begin{align*}
	\tau \leq \frac{1}{2} \min_{e \in \mathcal{E}} \bigg\{ \frac{h_e}{\max_{\rho_e} |f'(\rho_e)|} \bigg\} = \frac{1}{2} \min_{e \in \cE}\{h_e\} 
\end{align*}
guarantees that waves emerging at the cell boundaries to be defined do not interact. For the discretization of the LWR equation on each road $e \in \cE$, we use the Godunov's method to approximate the cell averages 
\begin{align*}
	\rho_{e,i+0.5}^n \approx \frac{1}{h_e} \int_{x_{e,i}}^{x_{e,i+1}} \rho_e(x,t^n) ~~dx,~~~~i\in\{0,...,N_e-1\},~n\in \{0,...,M\},
\end{align*}
such that
\begin{align}	
	\rho_{e,0.5}^{n+1}   &= \rho_{e,0.5}^n - \tfrac{\tau}{h_e} \big(F^G(\rho_{e,0.5}^n,\rho_{e,1.5}^n) - q_e^{in,n}\big) \text{,} \\
	\rho_{i+0.5}^{e,n+1} &= \rho_{e,i+0.5}^n - \tfrac{\tau}{h_e} \big(F^G(\rho_{e,i+0.5}^n,\rho_{e,i+1.5}^n) - F^G(\rho_{e,i-0.5}^n,\rho_{e,i+0.5}^n) \big)\text{,} \\
	\rho_{e,N-0.5}^{n+1} &= \rho_{e,N-0.5}^n - \tfrac{\tau}{h_e} \big(q_e^{out,n} - F^G(\rho_{e,N-1.5}^n,\rho_{e,N-0.5}^n)\big) \text{,}
\end{align}
where $F^G(u,v)$ denotes the numerical flux of the Godunov scheme. The fluxes at the boundary $q_e^{in,n}$ and $q_e^{out,n}$ are obtained by replacing the continuous variables in \eqref{eq:a_dB}-\eqref{eq:c_dB} by their discrete versions. As approximation to $r_v(t^n)$, we use the first order explicit Euler method and obtain
\begin{align*}
	r_v^{n+1} = r_v^{n} + \tau~ \left(\sum_{e\in \delta^+(v)} q_e^{out,n} - \sum_{e \in \delta^-(v)} q_e^{in,n}\right).
\end{align*}
The presented approximation of the density and the buffer load allow for a discretization of the car's path. In Subsection \ref{sec:Naive}, a naive algorithm to track a car on a single road is investigated before applying a more complex one in Subsection \ref{sec:Complex}. Section \ref{sec:TravelWaiting} is aimed at approximating the travel and waiting times. For completeness, we give a brief description of the discrete versions of the edge weights to determine the next road of the car's path in Subsection \ref{chap:NextRoad}.

\subsection{Naive Tracking Algorithm}\label{sec:Naive}

An intuitive approach to solve an ordinary differential equation numerically might be the explicit Euler method. We consider a car starting at time $t^0=0$ at position $x^0=x^*$ and iteratively compute the position $x^{n+1}$ at the next time $t^{n+1}$ by 
\begin{align*}
	x^{n+1} = x^n + \tau v(\rho_{i+0.5}^n).
\end{align*}
The speed of the car is based on the average density $\rho_{i+0.5}^n$ of the cell in which the car is located at time $t^n$. Note that we drop the index $e$ here since we only consider a single road. We terminate the procedure above when the car either crosses the end of the road or the time horizon $T$ is reached. 

Apparently, we cannot completely avoid an error due to the underlying approximated cell averages. However, we expect this error to decline with decreasing step size. The naive algorithm does not adjust the car's speed when it reaches a new cell with a possibly different density within a time step. Additionally, waves are assumed to emerge at cell boundaries and to travel along the road, which should immediately influence the speed of the car when hit by a wave. Therefore, we also consider a more complex algorithm including interactions with waves.

\subsection{Complex Tracking Algorithm} \label{sec:Complex} 

The more complex algorithm is based on the work of \cite{Bretti.2008} to track a car in a classical traffic network without buffers at junctions. The idea of this algorithm is to exactly solve the coupled ordinary differential equation at every time step using the piecewise constant densities resulting from Godunov's method to describe the speed of the car. We start by defining the shifted cells 
\begin{align*}
	C_0 = [x_0,x_{0.5}), ~~~C_i = [x_{i-0.5},x_{i+0.5}),~i=1,...,N-1, ~~~ C_N = [x_{N-0.5},x_N].
\end{align*}
Since the cell averages used for the Godunov's scheme are defined from $x_i$ to $x_{i+1}$, the emerging waves start exactly in the middle of these shifted cells $C_i$. For a schematic illustration of the shifted cells we refer to Figure \ref{im:CellsCar}. 

\begin{figure}[h]
	\centering
	\begin{tikzpicture}[decoration=brace,xscale=1.5]
	\draw [->] (-0.5,0) -- (9,0);
	
	\draw[decorate]  (1.02,2) --  node[above] {$C_{i-1}$} (2.98,2);
	\draw[decorate]  (3.02,2) --  node[above] {$C_{i}$}(4.98,2);
	\draw[decorate]  (5.02,2) --  node[above] {$C_{i+1}$}(6.98,2);
	
	\draw [dashed] (0,0) -- (0,1.8);
	\draw [dashed] (2,0) -- (2,1.8);
	\draw [dashed] (4,0) -- (4,1.8);
	\draw [dashed] (6,0) -- (6,1.8);
	\draw [dashed] (8,0) -- (8,1.8);
	
	\coordinate[label=below:$x_{i-2}$] ()  at (0,0);
	\coordinate[label=below:$x_{i-1}$] ()  at (2,0);
	\coordinate[label=below:$x_{i}$] ()  at (4,0);
	\coordinate[label=below:$x_{i+1}$] ()  at (6,0);
	\coordinate[label=below:$x_{i+2}$] ()  at (8,0);
	
	\draw (0,1.1) -- node[above] {$~~\rho_{i-1.5}^n$} (2,1.1);
	\draw (2,0.7) -- node[above] {$~~\rho_{i-0.5}^n$} (4,0.7);
	\draw (4,1.2) -- node[above] {$~~\rho_{i+0.5}^n$}(6,1.2);
	\draw (6,1) -- node[above] {$~~\rho_{i+1.5}^n$} (8,1);
	
	\draw [dotted, line width=0.3mm] (1,0) -- (1,1.9);
	\draw [dotted, line width=0.3mm] (3,0) -- (3,1.9);
	\draw [dotted, line width=0.3mm] (5,0) -- (5,1.9);
	\draw [dotted, line width=0.3mm] (7,0) -- (7,1.9);
	
	\draw (0,0.05) -- (0,-0.05);
	\draw (2,0.05) -- (2,-0.05);
	\draw (4,0.05) -- (4,-0.05);
	\draw (6,0.05) -- (6,-0.05);
	\draw (8,0.05) -- (8,-0.05);
	
	\end{tikzpicture}
	\caption{Schematic illustration of the cells $C_i$ and average densities $\rho_{i+0.5}^n$}
	\label{im:CellsCar}
\end{figure}
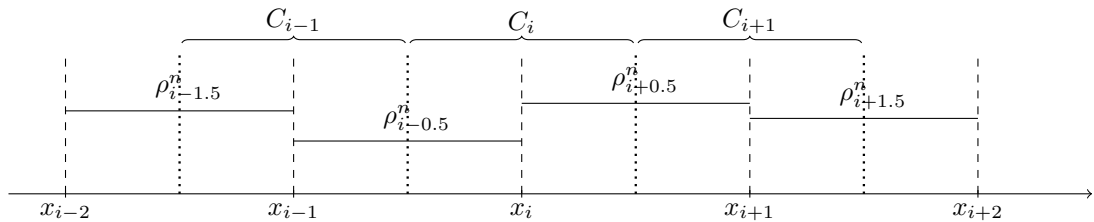

\begin{lemma} 
	Let $x^n \in C_i$ be the position of the car at time $t^n$. Further, the concave flux function $f(\rho)=\rho v(\rho)$ and cell averages approximated by the Godunov's method are assumed. Then, under the condition $\tau \leq \frac{1}{2} \frac{h}{\max |f'(\rho)|}$, for $x^n \in [x_{i-0.5},x_i)$ only the wave starting at $x_i$, and similar for $x^n \in [x_i,x_{i+0.5})$ only the wave starting at $x_{i+1}$, can influence the car trajectory within a time step $\tau$.
\end{lemma}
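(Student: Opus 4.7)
The plan is to separately bound the reach of each Riemann wave emerging at a cell interface and the displacement of the car within one time step, and then to compare regions to conclude that only one wave can overlap with the car's trajectory. Godunov's scheme at time $t^n$ solves a Riemann problem at every cell interface $x_k$ between the piecewise constant states $\rho_{k-0.5}^n$ and $\rho_{k+0.5}^n$, producing a shock or rarefaction whose speeds are bounded in absolute value by $\max_\rho |f'(\rho)|$. The CFL assumption $\tau \leq \tfrac{1}{2} h / \max|f'|$ therefore confines the support of the wave originating at $x_k$ to the interval $[x_k - h/2,\; x_k + h/2] = [x_{k-0.5},\; x_{k+0.5}]$ for every $t \in [t^n,t^{n+1}]$, and this inclusion is strict for $t < t^{n+1}$.

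Next, I would establish the corresponding bound for the car. Because $f$ is strictly concave with $f(0)=0$, the quotient $v(\rho) = f(\rho)/\rho$ is non-increasing, so $v(\rho) \leq \lim_{\rho \to 0^+} v(\rho) = f'(0) = \max_\rho f'(\rho) \leq \max_\rho |f'(\rho)|$. Combined with the CFL condition this gives $\tau\, v(\rho) \leq h/2$, so starting from $x^n \in [x_{i-0.5},x_i)$ the trajectory satisfies $x(t) \in [x^n,\, x^n + (t-t^n)\max|f'|)$, which is contained in $[x_{i-0.5},\, x_{i+0.5})$ on $[t^n,t^{n+1}]$, with strict upper bound $x(t) < x_{i+0.5}$ for $t < t^{n+1}$.

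The core of the argument is then a direct comparison. Waves from interfaces $x_{k}$ with $k \leq i-1$ are confined to $(-\infty,\, x_{i-0.5}]$, while those from $k \geq i+1$ are confined to $[x_{i+0.5},\, +\infty)$; both inclusions are strict on the open interval $(t^n,t^{n+1})$. Since the car trajectory lies in $[x_{i-0.5},\, x_{i+0.5})$ with strict interior containment for $t \in (t^n,t^{n+1})$, the only wave whose support can contain the moving position $x(t)$ is the one originating at $x_i$. The case $x^n \in [x_i,\, x_{i+0.5})$ is fully symmetric: the car stays in $[x_i,\, x_{i+1})$, outside the support of every wave except the one emerging at $x_{i+1}$.

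The main obstacle is making precise the velocity estimate $v(\rho) \leq \max|f'(\rho)|$, which must be argued from the concavity of $f$ and the boundary condition $f(0)=0$ rather than taken as an a priori assumption; the rest is a careful bookkeeping of half-open intervals to ensure that mere boundary contact at the final time $t^{n+1}$ is not counted as an interaction during the step.
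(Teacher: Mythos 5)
Your treatment of the first half-cell is sound and coincides with the paper's, but the claimed ``full symmetry'' of the case $x^n \in [x_i,x_{i+0.5})$ is a genuine gap. By your own CFL estimate the wave emerging at $x_i$ has support contained in $[x_{i-0.5},x_{i+0.5}]$, while the car's positions during the step lie in $[x^n,\,x^n+h/2)\subseteq [x_i,x_{i+1})$; these two sets overlap on $[x^n,x_{i+0.5}]$, so the containment argument does \emph{not} place the car outside the support of the wave from $x_i$. A wave propagating rightward from $x_i$ could, a priori, catch up with a car that started at or just ahead of $x_i$, and excluding this is precisely the non-trivial half of the lemma. It cannot be settled by interval bookkeeping; it requires the structure $f(\rho)=\rho v(\rho)$ with $v$ strictly decreasing. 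Concretely, if $\rho_{i-0.5}^n<\rho_{i+0.5}^n$ the Rankine--Hugoniot speed satisfies
\begin{align*}
\lambda_i \;=\; \frac{f(\rho_{i+0.5}^n)-f(\rho_{i-0.5}^n)}{\rho_{i+0.5}^n-\rho_{i-0.5}^n}\;\leq\; v(\rho_{i+0.5}^n),
\end{align*}
and if $\rho_{i-0.5}^n>\rho_{i+0.5}^n$ the right edge of the rarefaction fan moves at $f'(\rho_{i+0.5}^n)=v(\rho_{i+0.5}^n)+\rho_{i+0.5}^n\,v'(\rho_{i+0.5}^n)\leq v(\rho_{i+0.5}^n)$. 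Since a car located in $[x_i,x_{i+0.5})$ travels at speed $v(\rho_{i+0.5}^n)$, the wave from $x_i$ never overtakes it, leaving only the wave from $x_{i+1}$ as a possible interaction; this speed comparison is the content of the paper's proof of the second case and is entirely absent from your proposal. (Your preliminary bound $v(\rho)\leq \max|f'|$ via monotonicity of $f(\rho)/\rho$ is correct and slightly more general than the paper's appeal to $v\leq 1$, but it does not substitute for the overtaking argument.)
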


\begin{proof} 
By the CFL-condition $\tau \leq \frac{1}{2} \frac{h}{\max |f'(\rho)|}$, two neighboring waves do not interact before $\tau$ and the car can travel at most a distance of half a cell in one time step due to $v(\rho) \leq 1$. Consequently, for $x^n \in [x_{i-0.5},x_i]$ only a wave starting at $x_i$ can intersect the car trajectory before $\tau$.

If $x^n \in [x_i,\xr)$, we take a closer look at the wave starting at $x_i$. In case of a shock wave, i.e. $\rho_{i-0.5}<\rho_{i+0.5}$, its speed can be calculated according to the Rankine-Hugoniot-condition. Since $f(\rho) = \rho v(\rho)$ with a strictly decreasing function $v$, we can write
\begin{align*}
	\lambda_i  &= \frac{\rrp v(\rrp) - \rrm v(\rrm)}{\rrp - \rrm} < \frac{\rrp v(\rrm) - \rrm v(\rrm)}{\rrp - \rrm} = v(\rrm) = \dot{x}.
\end{align*}
In case of a rarefaction, the wave propagates at the speed
\begin{align*}
	\lambda_i = f'(\rrp) = v(\rrp) + \rrp v'(\rrp) \leq v(\rrp) = \dot{x}.
\end{align*}
The inequality holds due to the decreasing velocity, $v'<0$, and $\rho_{i+0.5}^n \geq 0$. In both cases, we see that the wave is never faster than the car. We conclude that a wave starting at $x_i$ has no intersection with the car trajectory if $x^n \in [x_i,x_{i+0.5})$.
\end{proof}
Then, the algorithm can be formulated in the following. We divide the cell $C_i = [\xl,\xr)$ in half, i.e. either
\begin{align*}
	x^n \in [\xl,x_i) ~~\text{or}~~ x^n \in [x_i,\xr),
\end{align*} 
and only study the case  $x^n \in [\xl,x_i)$ since the procedure for $x^n \in [x_i,\xr)$ is similar. Further, we distinguish between no wave, shock and rarefaction. For simplicity, we assume $f(\rho)=\rho v(\rho) = \rho(1-\rho)$, which allows for explicit representation.

When no wave occurs at $x_i$, the car is not deflected away from its original path and its speed remains constant on $[t^n,t^{n+1}]$. In particular, the new position at time $t^{n+1}$ can be calculated by the explicit Euler method, similar to the naive algorithm, i.e.
\begin{align*}
	x^{n+1} = x^n + \tau v(\rrm).
\end{align*}
If $\rrm < \rrp$, the path of the car possibly consists of two parts with different velocities. Starting with $v(\rrm)$, the car drives at a lower speed $v(\rrp)$ after an interaction with the shock front. The cell $C_i$ together with the car trajectory and the shock front are depicted in Figure \ref{im:Shock}.

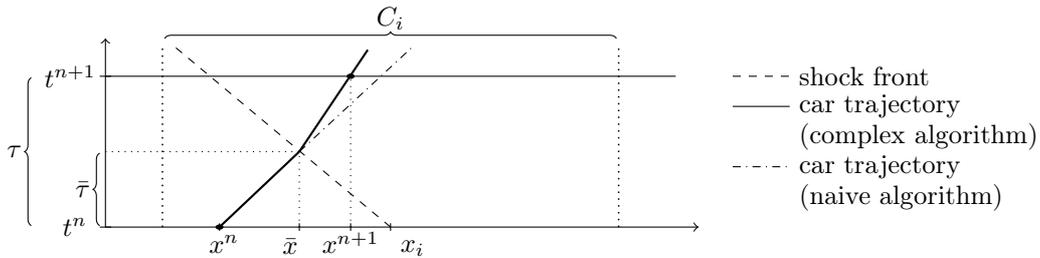
\begin{figure}[h]
	\centering
	\begin{tikzpicture}[decoration=brace,xscale=1.5]
	
	\draw [->] (0,0) -- (5.2,0);
	\draw [->] (0,0) -- (0,2.5);
	\draw (0,2) -- (5,2);
	
	\coordinate[label=left:$t^n~$] ()  at (0,0);
	\draw (-0.05,0) -- (0.05,0);
	\coordinate[label=left:$t^{n+1}$] ()  at (0,2);
	\draw (-0.05,2) -- (0.05,2);
	
	\draw [decorate] (-0.65,0.02) -- node[left] {$\tau$}(-0.65,1.98);
	
	\draw [decorate] (-0.05,0.02) -- node[left] {$\bar{\tau}$}(-0.05,0.98);
	
	\draw [dashed] (2.5,0) -- (0.6,2.4);
	\coordinate[label=below:$~~~~~x_{i}$] ()  at (2.5,-0.06);
	\draw (2.5,0.05) -- (2.5,-0.05);

	\draw[decorate]  (0.52,2.5) --  node[above] {$C_{i}$}(4.48,2.5);
	\draw [dotted, line width=0.2mm] (0.5,0) -- (0.5,2.4);
	\draw [dotted, line width=0.2mm] (4.5,0) -- (4.5,2.4);
	
	\draw [thick] (1,0) -- (1.7,1);
	\draw [dash dot] (1.7,1) -- (2.7,2.4);
	\draw (1,0.05) -- (1,-0.05);
	\coordinate [label=below:$~x^{\scriptsize{n}}$] ()  at (1,0.02);
	\fill[black] (1,0) circle (1pt);
	
	\draw [dotted] (1.7,0) -- (1.7,1);
	\draw [dotted] (0,1) -- (1.7,1);
	
	\draw [dotted] (2.15,0) -- (2.15,2);
	\draw (2.15,0.05) -- (2.15,-0.05);
	\coordinate[label=below:$x^{\scriptsize{n+1}}$] ()  at (2.15,0.065);
	\fill[black] (2.15,2) circle (1pt);
	
	\coordinate[label=below:$\bar{x}~~$] ()  at (1.7,-0.03);
	\draw (1.7,0.05) -- (1.7,-0.05);
	
	\draw [thick] (1.7,1) -- (2.3,2.35);
	
	\begin{scope}[shift={(5.5,1.2)}]
		\path [yshift=-0.8cm](0,0) -- plot[] (0.25,0) -- (0.5,0) node[right]{(naive algorithm)};
		\draw [dash dot,yshift=-0.4cm](0,0) -- plot[] (0.25,0) -- (0.5,0) node[right]{car trajectory};
		\path [](0,0) -- plot[] (0.25,0) -- (0.5,0) node[right]{(complex algorithm)};
		\draw [yshift=0.4cm](0,0) -- plot[] (0.25,0) -- (0.5,0) node[right]{car trajectory};
		\draw [dashed,yshift=0.8cm](0,0) -- plot[] (0.25,0) -- (0.5,0) node[right]{shock front};
	\end{scope}
	\end{tikzpicture}
	\caption{Trajectory of a car that hits a shock front at the intersection $(t^n+\bar{\tau},\bar{x})$}
	\label{im:Shock}
\end{figure}

Until the time $t^n+\bar{\tau}$, when the car interacts with the wave, the car trajectory can be expressed by
\begin{align*}
	x(t) = x^n + (t-t^n) v(\rrm) ,~~t \in [t^n,t^n + \bar{\tau}].
\end{align*}
The velocity of a shock wave starting at $x_i$ at time $t^n$ satisfies
\begin{align*}
	\lambda_i = \frac{f(\rrp) - f(\rrm)}{\rrp - \rrm}.
\end{align*}
When the car hits the shock front, which reads
\begin{align}
	x^n + v(\rrm) \bar{\tau} = x_i + \lambda_i \bar{\tau},
\end{align} 
the point of the intersection $(t^n+\bar{\tau},\bar{x})$ is given by 
\begin{align}
	\bar{\tau} = \frac{x_i - x^n}{v(\rrm) - \lambda_i} \text{~~~~and~~~~}
	\bar{x} = x^n + v(\rrm) \bar{\tau}. \label{eq:barX}
\end{align}
We note that $\bar{\tau}$ is well-defined since it can be shown that $v(\rrm) \neq \lambda_i$ by employing $f(\rho)=\rho v(\rho) = \rho(1-\rho)$. Additionally, $\bar{\tau} > 0 $ due to $x^n \in  [\xl,x_i)$. Now, we distinguish: 
\begin{enumerate}[(i)]
	\item $\bar{\tau} \geq \tau$, i.e., the shock and the car do not interact before $\tau$ and the car's position at time $t^{n+1}$ is
	\begin{align*}
	x^{n+1} = x^n + v(\rrm) \tau.
	\end{align*}
	\item $\bar{\tau} < \tau$, i.e., we have an intersection at $(t^n+\bar{\tau},\bar{x})$ and the new position at time $t^{n+1}$ is
	\begin{align*}
	x^{n+1} = \bar{x} + v(\rrp) (\tau -\bar{\tau}).
	\end{align*}
\end{enumerate}

If $\rrm > \rrp$, a rarefaction starts at $x_i$ at time $t^n$ and we need to consider three different elements of the car trajectory. First, the car will drive at speed $v(\rrm)$. If it comes to an intersection with the rarefaction, the speed continually increases until the car possibly leaves the rarefaction. Figure \ref{im:RareCar} shows a schematic illustration of the trajectory.

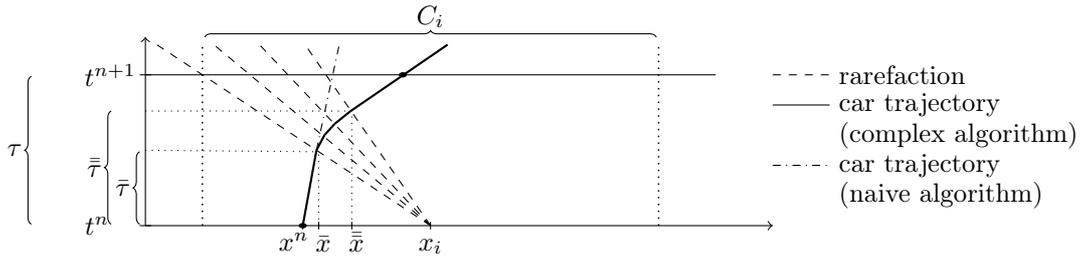
\begin{figure}[h]
	\centering
	\begin{tikzpicture}[decoration=brace,xscale=1.5]
	
	\draw [->] (0,0) -- (5.5,0);
	\draw [->] (0,0) -- (0,2.5);
	\draw (0,2) -- (5,2);
	
	\coordinate[label=left:$t^n~~~$] ()  at (0,0);
	\draw (-0.05,0) -- (0.05,0);
	\coordinate[label=left:$t^{n+1}$] ()  at (0,2);
	\draw (-0.05,2) -- (0.05,2);
	\fill[black] (2.257,2) circle (1pt);
	
	\draw [decorate] (-1,0.02) -- node[left] {$\tau$}(-1,1.98);
	\draw [decorate] (-0.3,0.02) -- node[left] {$\bar{\bar{\tau}}$}(-0.3,1.52);
	\draw [decorate] (-0.05,0.02) -- node[left] {$\bar{\tau}$}(-0.05,0.98);
	
	\draw [dashed] (2.5,0) -- (0.1,2.4);
	\draw [dashed] (2.5,0) -- (0.6,2.4);
	\draw [dashed] (2.5,0) -- (1,2.4);
	\draw [dashed] (2.5,0) -- (1.4,2.4);
	
	\draw [dash dot] (1.5,1) -- (1.7,2.4);
	
	\coordinate[label=below:$x_{i}$] ()  at (2.5,-0.05);
	\draw (2.5,0.05) -- (2.5,-0.05);
	
	\draw[decorate]  (0.52,2.5) --  node[above] {$C_{i}$}(4.48,2.5);
	\draw [dotted, line width=0.2mm] (0.5,0) -- (0.5,2.4);
	\draw [dotted, line width=0.2mm] (4.5,0) -- (4.5,2.4);
	
	\draw [thick] (1.38,0) -- (1.5,1);
	\draw [thick] (1.5,1) -- (1.57,1.2);
	\draw [thick] (1.57,1.2) -- (1.66,1.35);
	\draw [thick] (1.66,1.35) -- (1.8,1.52);
	\draw [thick] (1.8,1.52) -- (2.65,2.4);
	
	\fill[black] (1.38,0) circle (1pt);
	\coordinate [label=below:$x^n~~$] ()  at (1.38,0.03);
	\draw (1.38,0.05) -- (1.38,-0.05);
	
	\draw [dotted] (1.52,0) -- (1.52,1);
	\draw [dotted] (0,1) -- (1.52,0.98);
	
	\draw [dotted] (1.81,0) -- (1.81,1.52);
	\draw [dotted] (0,1.52) -- (1.81,1.52);
	
	\coordinate[label=below:$~\bar{x}$] ()  at (1.52,-0.02);
	\draw (1.52,0.05) -- (1.52,-0.05);
	
	\coordinate[label=below:$~\bar{\bar{x}}$] ()  at (1.81,0.03);
	\draw (1.81,0.05) -- (1.81,-0.05);
	
	\begin{scope}[shift={(5.5,1.2)}]
		\path [yshift=-0.8cm](0,0) -- plot[] (0.25,0) -- (0.5,0) node[right]{(naive algorithm)};
		\draw [dash dot,yshift=-0.4cm](0,0) -- plot[] (0.25,0) -- (0.5,0) node[right]{car trajectory};
		\path [](0,0) -- plot[] (0.25,0) -- (0.5,0) node[right]{(complex algorithm)};
		\draw [yshift=0.4cm](0,0) -- plot[] (0.25,0) -- (0.5,0) node[right]{car trajectory};
		\draw [dashed,yshift=0.8cm](0,0) -- plot[] (0.25,0) -- (0.5,0) node[right]{rarefaction};
	\end{scope}
	\end{tikzpicture}
	\caption{Trajectory of a car that reaches a rarefaction at $(t^n+\bar{\tau},\bar{x})$ and leaves at $(t^n+\bar{\bar{\tau}},\bar{\bar{x}})$ }
	\label{im:RareCar}
\end{figure}

With the choice $v(\rho) = 1-\rho$, the ODE to be solved in case of a rarefaction starting at $(t^n,x_i)$ becomes
\begin{align*}
	\dot{x}(t) &= v(\rho(x(t),t)) = 1 - \rho(x(t),t) \\
			   &= \begin{cases}
						1-\rrm & \text{if } ~ x(t) \leq x_i + f'(\rrm)(t-t^n),\\
						\frac{1}{2} + \frac{x(t)-x_i}{2(t-t^n)} & \text{if } ~ f'(\rrm) (t-t^n) < x(t)-x_i < f'(\rrp) (t-t^n), \\
						1-\rrp & \text{if } ~ x(t) \geq x_i + f'(\rrp)(t-t^n).
				  \end{cases}
\end{align*}
The first intersection is denoted again by $(t^n+\bar{\tau},\bar{x})$ can be derived in the same manner as in the case of a shock. Instead of applying the 
Rankine-Hugoniot-condition to obtain the speed of the wave, we take the velocity at which the rarefaction propagates to the left, i.e.
\begin{align*}
	\lambda_i = f'(\rrm).
\end{align*}
Thus, we can compute the coordinates of this intersection according to 
\begin{align*}
	\bar{\tau} = \frac{x_i - x^n}{v(\rrm) - \lambda_i} \text{~~~~and~~~~} \bar{x} = x^n + v(\rrm) \bar{\tau}.
\end{align*}
Again, it can be shown that $\bar{\tau}$ is well-defined and always positive. To describe the car trajectory while the car is in the rarefaction, we have to solve the inhomogeneous linear ordinary differential equation
\begin{align*}
	\dot{x}(t) = \frac{1}{2} + \frac{x(t)-x_i}{2(t-t^n)}.
\end{align*}
Applying the method variation of parameters, the solution reads
\begin{align}
	x(t) = x_i + t - t^n - \sqrt{t-t^n} ~ \frac{\bar{\tau} + x_i - \bar{x}}{\sqrt{\bar{\tau}}},~~~t \in [t^n+\bar{\tau},t^n+\bar{\bar{\tau}}]. \label{eq:VarConst}
\end{align}
Note that $t^n+\bar{\bar{\tau}}$ defines the time at which the car leaves the rarefaction. However, $\bar{\bar{\tau}}$ is not necessarily smaller than infinity. It is also possible that the car never leaves the rarefaction. To find out if a final intersection at $\bar{\bar{x}}$ at time $t^n+\bar{\bar{\tau}}$ exists, compare Figure \ref{im:RareCar}, we solve the equation
\begin{align}
	x(t^n+\bar{\bar{\tau}}) = x_i + f'(\rrp) \bar{\bar{\tau}}, \label{eq:xBarBar}
\end{align}
where $x(t^n+\bar{\bar{\tau}})$ is given by \eqref{eq:VarConst}. Rearranging leads to
\begin{align}
	(1-f'(\rrp)) ~\bar{\bar{\tau}} - \frac{\bar{\tau}+x_i-\bar{x}} {\sqrt{\bar{\tau}}} ~\sqrt{\bar{\bar{\tau}}} = 0. \label{eq:NonlinEq}
\end{align}
It can be shown that $\frac{\bar{\tau}+x_i-\bar{x}} {\sqrt{\bar{\tau}}} \neq 0$ by using the explicit values for $\bar{\tau}$ and $\bar{x}$. If $\rrp = 0$, the unique solution to this non-linear equation is $\bar{\bar{\tau}} = 0$. This means, that there is no final intersection point since the car is not fast enough to leave the rarefaction. To be more precise, the rarefaction propagates at speed $f'(\rrp)=1$ to the right. At the same time, the car becomes faster while driving through the rarefaction. However, it can reach at most the speed $v_{max} = 1$. Hence, it cannot overtake the right rarefaction front. Otherwise, if $\rrp \neq 0$, \eqref{eq:NonlinEq} has two solutions 
\begin{align*}
	\bar{\bar{\tau}}_1 = 0 \text{~~~~and~~~~} \bar{\bar{\tau}}_2 = \left(\frac{\bar{\tau}+x_i-\bar{x}} {\sqrt{\bar{\tau}}~(1-f'(\rrp))}\right)^2.
\end{align*}
The second candidate is the desired $\bar{\bar{\tau}}$, which is well-defined since $\rrp \neq 0$ and $\bar{\tau} > 0$. By \eqref{eq:VarConst}-\eqref{eq:xBarBar}, the position of the final intersection $\bar{\bar{x}}$ is given by 
\begin{align*}
	\bar{\bar{x}} = x(t^n + \bar{\bar{\tau}})  = x_i + \bar{\bar{\tau}}  - \sqrt{\bar{\bar{\tau}}} \frac{\bar{\tau} + x_i - \bar{x}}{\sqrt{\bar{\tau}}} = x_i + f'(\rrp) \bar{\bar{\tau}}. 
\end{align*}
To calculate the new position of the car, we differentiate the following cases.
\begin{enumerate}[(i)]
	\item $\bar{\tau} \geq \tau$, i.e. no interaction takes place before $\tau$ and the new position of the car at time $t^n$ is given by
	\begin{align*}
		x^{n+1} = x^n + v(\rrm) \tau.
	\end{align*}
	\item $\bar{\tau} < \tau$, i.e. the car interacts with the rarefaction before $\tau$ and we need to further distinguish: 
	\begin{enumerate}
		\item $\bar{\bar{\tau}} \geq \tau$, i.e. the car is still in the rarefaction at time $t^{n+1}$. Applying \eqref{eq:VarConst} yields
		\begin{align*}
			x^{n+1} = x_i + \tau - \sqrt{\tau} ~ \frac{\bar{\tau} + x_i - \bar{x}}{\sqrt{\bar{\tau}}}.
		\end{align*}
		\item $\bar{\bar{\tau}} <\tau$, i.e. the car has left the rarefaction before $\tau$ and we also have to include the part of the trajectory when the car drives at speed $v(\rrp)$. We obtain
		\begin{align*}
			x^{n+1} = \bar{\bar{x}} + v(\rrp) (\tau - \bar{\bar{\tau}}).
		\end{align*}
	\end{enumerate}
\end{enumerate}
According to the described procedure, we would have to consider the wave starting at $x_N$ for a car located in the last cell $C_N = [x_{N-0.5},x_N]$. Since $x_N = b$ is the endpoint of the road, we do not have a density $\rho_{N+0.5}^n$. A possible remedy is to extend the road artificially assuming the same density $\rho_{N-0.5}^n$ as in the last cell, see Figure \ref{im:endOfRoad}. This implies that no wave emerges from $x_N$ and the car drives at constant speed $v(\rho_{N-0.5}^n)$ to the end of the road. 

\begin{figure}[h]
	\centering
	\begin{tikzpicture}[decoration=brace,xscale=1.5]
	\draw [->] (-0.5,0) -- (8,0);
	
	\draw[decorate]  (1.02,2) --  node[above] {$C_{N-2}$} (2.98,2);
	\draw[decorate]  (3.02,2) --  node[above] {$C_{N-1}$}(4.98,2);
	\draw[decorate]  (5.02,2) --  node[above] {$C_{N}$}(5.98,2);
	
	\draw [dashed] (0,0) -- (0,1.8);
	\draw [dashed] (2,0) -- (2,1.8);
	\draw [dashed] (4,0) -- (4,1.8);
	\draw [dashed] (6,0) -- (6,1.8);
	
	\coordinate[label=below:$x_{N-3}$] ()  at (0,0);
	\coordinate[label=below:$x_{N-2}$] ()  at (2,0);
	\coordinate[label=below:$x_{N-1}$] ()  at (4,0);
	\coordinate[label=below:$x_{N}$] ()  at (6,0);
	
	\draw (0,1.1) -- node[above] {$~~\rho_{N-2.5}^n$} (2,1.1);
	\draw (2,0.7) -- node[above] {$~~\rho_{N-1.5}^n$} (4,0.7);
	\draw (4,1.2) -- node[above] {$~~\rho_{N-0.5}^n$}(6,1.2);
	\draw [dash dot](6,1.2) -- node[above] {$~~\rho_{N+0.5}^n = \rho_{N-0.5}^n$} (7.9,1.2);
	
	\draw [dotted, line width=0.2mm] (1,0) -- (1,1.9);
	\draw [dotted, line width=0.2mm] (3,0) -- (3,1.9);
	\draw [dotted, line width=0.2mm] (5,0) -- (5,1.9);
	\draw [dotted, line width=0.2mm] (6,0) -- (6,1.9);
	\end{tikzpicture}
	\caption{Density at the end of a road}
	\label{im:endOfRoad}
\end{figure}
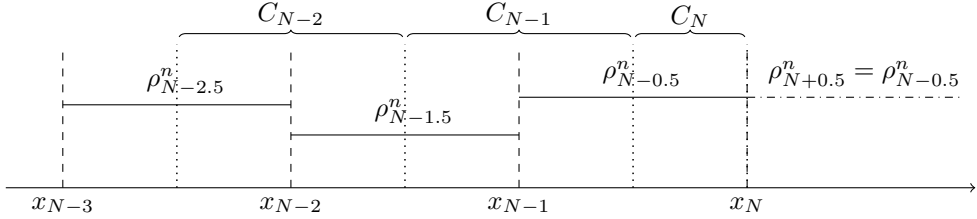

\subsection{Travel and Waiting Time} \label{sec:TravelWaiting}

For both approaches, i.e. the naive and complex algorithm, we need to discretize the behavior at nodes. Let the car be located at time $t^{n^*} \in \{t^0,...,t^M\}$ at $x_e^*$ on an edge $e \in \mathcal{E}$. For a sufficiently large time horizon $T$, our presented tracking algorithms terminate with $n \in \{n^*,...,M\}$ such that condition $x^n \geq b_e$ is valid. For $x^n > b_e$, the car has driven too far and an intermediate step to calculate the arrival time exactly at the point $b_e$ is needed. 

Since no wave emerges from $b_e$, the car's speed can be assumed to be constant and we are looking for the smallest $\hat{n} \in \{n^*,...,M\}$ and appropriate $\hat{\tau} \in [0,\tau)$ such that
\begin{align}
	x^{\hat{n}} + \hat{\tau}  v( \rho_{e,N_e-0.5}^{\hat{n}} ) = b_e ~~~
	\Leftrightarrow ~~~\hat{\tau} = \frac{b_e-x^{\hat{n}}}{v( \rho_{e,N_e-0.5}^{\hat{n}} )}, \label{eq:that}
\end{align}
which means that at time $t^{\hat{n}}+\hat{\tau}$, the car is located at $b_e$. Note that $v( \rho_{e,N_e-0.5}^{\hat{n}} ) \neq 0$, otherwise the car is waiting and must have already reached the end of the road in the time step before. Hence, $\hat{\tau}$ is well-defined and the travel time $tt_e(t_e^{n^*})$ on road $e$ can be approximated by 
\begin{align*}
	tt_e^{n^*} = (\hat{n}-n^*)\tau + \hat{\tau}.
\end{align*}
Since we know the arrival time $t^{\hat{n}}+\hat{\tau}$ at the end of road $e$, we are able to approximate the buffer load $r_v(t^{\hat{n}}+\hat{\tau})$ by $\hat{r}_v$ with $e \in \delta^+(v)$. For this purpose, we use linear interpolation between $t^{\hat{n}}$ and $t^{\hat{n}+1}$ and define 
\begin{align*}
	\hat{r}_v = r^{\hat{n}}_v + \hat{\tau} (f^{in,\hat{n}}_v - f^{out,\hat{n}}_v),
\end{align*}
with the total in- and outflow at the node $v$ at time $t^{\hat{n}}$ denoted by $f^{in,\hat{n}}_v$ and $f^{out,\hat{n}}_v$.
Similar to the continuous formulation, we distinguish two cases $\hat{r}_v = 0$ and $\hat{r}_v > 0$.

If $\hat{r}_v=0$, the car immediately passes the node and continues on the next road $\tilde{e} \in \delta^-(v)$  at a speed depending on the density in the first cell. That means, the waiting time is zero and the position of the car at the next time $t^{\hat{n}+1}$ is
\begin{align*}
	x^{\hat{n}+1} = a_{\tilde{e}} + (\tau - \hat{\tau}) v(\rho_{\tilde{e},0.5}^{\hat{n}}).
\end{align*}
This case is illustrated in the left picture of Figure \ref{im:Node}.

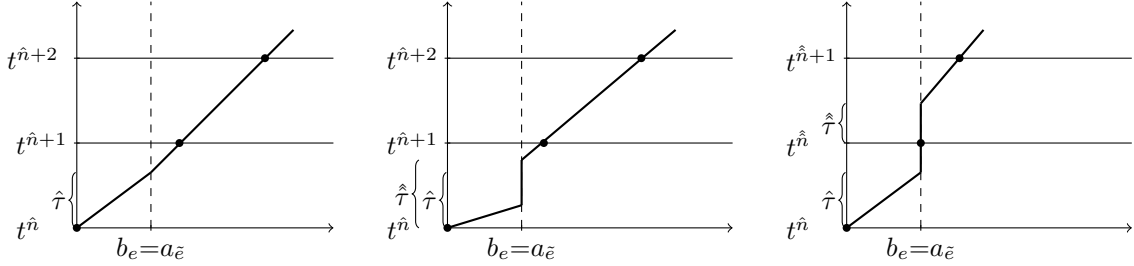
\begin{figure}[h]
	\begin{tikzpicture}[decoration={markings,mark=at position 1 with {\arrow[scale=1,black]{latex}};},scale=0.75]
	\draw [->] (1.5,0) -- (1.5,4);
	\draw [->] (1.5,0) -- (6,0);
	
	\draw [dashed] (2.8,0) -- (2.8,4);
	\draw (2.8,0.05) -- (2.8,-0.05);
	\coordinate[label=below:$b_e {=} a_{\tilde{e}}$] ()  at (2.8,0);
	
	\draw (1.45,0) -- (1.55,0);
	\coordinate[label=left:$t^{\hat{n}}~~~$] ()  at (1.5,0);
	
	\draw (1.5,1.5) -- (6,1.5);
	\draw (1.45,1.5) -- (1.55,1.5);
	\coordinate[label=left:$t^{\hat{n}+1}$] ()  at (1.5,1.5);
	
	\draw (1.5,3) -- (6,3);
	\draw (1.45,3) -- (1.55,3);
	\coordinate[label=left:$t^{\hat{n}+2}~$] ()  at (1.5,3);
	
	\draw [thick] (1.5,0) -- (2.8,0.98);
	\draw [thick] (2.8,0.98) -- (5.3,3.5);
	
	\fill[black] (1.5,0) circle (2pt);
	\fill[black] (3.3,1.5) circle (2pt);
	\fill[black] (4.8,3) circle (2pt);
	
	\draw [decorate,decoration ={brace}] (1.48,0) -- node[left] {$\hat{\tau}$}(1.48,0.98);

	\draw [->] (7+1,0) -- (7+1,4);
	\draw [->] (7+1,0) -- (7+6,0);
	
	\draw [dashed] (7+2.3,0) -- (7+2.3,4);
	\draw (7+2.3,0.05) -- (7+2.3,-0.05);
	\coordinate[label=below:$b_e {=} a_{\tilde{e}}$] ()  at (7+2.3,0);
	
	\draw (7+0.95,0) -- (7+1.05,0);
	\coordinate[label=left:$t^{\hat{n}}~~~$] ()  at (7+1,0);
	
	\draw (7+1,1.5) -- (7+6,1.5);
	\draw (7+0.95,1.5) -- (7+1.05,1.5);
	\coordinate[label=left:$t^{\hat{n}+1}$] ()  at (7+1,1.5);
	
	\draw (7+1,3) -- (7+6,3);
	\draw (7+0.95,3) -- (7+1.05,3);
	\coordinate[label=left:$t^{\hat{n}+2}$] ()  at (7+1,3);
	
	\draw [thick] (7+1,0) -- (7+2.3,0.4);
	\draw [thick] (7+2.3,0.4) -- (7+2.3,1.2);
	\draw [thick] (7+2.3,1.2) -- (7+5,3.5);
	
	\fill[black] (7+1,0) circle (2pt);
	\fill[black] (7+2.69,1.5) circle (2pt);
	\fill[black] (7+4.4,3) circle (2pt);
	
	\draw [decorate,decoration ={brace}] (7+0.98,0) -- node[left] {$\hat{\tau}$}(7+0.98,0.98);
	\draw [decorate,decoration ={brace}] (7+0.5,0) -- node[left] {$\doublehat{\tau}$}(7+0.5,1.2);	
	
	\draw [->] (14+1,0) -- (14+1,4);
	\draw [->] (14+1,0) -- (14+6,0);
	
	\draw [dashed] (14+2.3,0) -- (14+2.3,4);
	\draw (14+2.3,0.05) -- (14+2.3,-0.05);
	\coordinate[label=below:$b_e {=} a_{\tilde{e}}$] ()  at (14+2.3,0);
	
	\draw (14+0.95,0) -- (14+1.05,0);
	\coordinate[label=left:$t^{\hat{n}}~~~$] ()  at (14+1,0);
	
	\draw (14+1,1.5) -- (14+6,1.5);
	\draw (14+0.95,1.5) -- (14+1.05,1.5);
	\coordinate[label=left:$t^{\doublehatexp{n}}~~~$] ()  at (14+1,1.5);
	
	\draw (14+1,3) -- (14+6,3);
	\draw (14+0.95,3) -- (14+1.05,3);
	\coordinate[label=left:$t^{\doublehatexp{n}+1}$] ()  at (14+1,3);
	
	\draw [thick] (14+1,0) -- (14+2.3,0.98);
	\draw [thick](14+2.3,0.98) -- (14+2.3,2.2);
	\draw [thick](14+2.3,2.2) -- (14+3.4,3.5);
	
	\fill[black] (14+1,0) circle (2pt);
	\fill[black] (14+2.3,1.5) circle (2pt);
	\fill[black] (14+2.98,3) circle (2pt);
	
	\draw [decorate,decoration ={brace}] (14+0.98,0) -- node[left] {$\hat{\tau}$}(14+0.98,0.98);
	\draw [decorate,decoration ={brace}] (14+0.98,1.5) -- node[left] {$\doublehat{\tau}$}(14+0.98,2.2);
	
	\end{tikzpicture}
	\caption{Trajectory of a car when the car can leave immediately (left), in the same time step (center), in the next time step (right)}
	\label{im:Node}
\end{figure}

Otherwise, if $\hat{r}_v>0$, we take a closer look at the discrete outflow $f^{out,n}_v$ with $n \geq \hat{n}$. To obtain the outflow within two time steps, we use again linear interpolation. For a small buffer load and a sufficiently large outflow $f_v^{out,\hat{n}}$, it is possible that the car enters and leaves the buffer within $\tau$, as depicted in the center of Figure \ref{im:Node}. Then, the leaving time $t^{\hat{n}}+\doublehat{\tau}$ with $ \doublehat{\tau} \in (\hat{\tau},\tau)$ is given by
\begin{align*}
	\hat{r}_v = (\doublehat{\tau} - \hat{\tau}) f^{out,\hat{n}}_v ~~~\Leftrightarrow~~~ \doublehat{\tau} = \frac{\hat{r}_v +\hat{\tau} f_v^{out,\hat{n}}}{f_v^{out,\hat{n}}}.
\end{align*}
As a consequence, the approximated waiting time at the node is $wt_v^{\hat{n}}=\doublehat{\tau}-\hat{\tau}$ and the car's position at time $t^{\hat{n}+1}$ can be computed by
\begin{align*}
	x^{\hat{n}+1} = a_{\tilde{e}} + (\tau - \doublehat{\tau}) v(\rho_{\tilde{e},0.5}^{\hat{n}}).
\end{align*}
If the car has to wait longer inside the buffer, we  
look for the smallest $\doublehat{n} \in \{ \hat{n}+1, \hat{n}+2,...,M \}$ and $\doublehat{\tau} \in [0,\tau)$ such that
\begin{align*}
	\hat{r}_v = (\tau - \hat{\tau}) f^{out,\hat{n}}_v + \tau \sum_{n=\hat{n}+1}^{\doublehatexp{n}-1} f^{out,n}_v + \doublehat{\tau} f^{out,\doublehatexp{n}}_v.
\end{align*}
The right picture of Figure \ref{im:Node} shows a schematic illustration of this situation. Then, the approximated waiting time can be computed by
\begin{align*}
	wt_v^{\hat{n}} = (\tau-\hat{\tau}) + \tau \left( \doublehat{n} - 1 -(\hat{n}+1) + 1 \right) + \doublehat{\tau} = -\hat{\tau} + \tau \left( \doublehat{n} - \hat{n} \right) + \doublehat{\tau}.
\end{align*}
For all $n=\hat{n}+1,...,\doublehat{n}$, the car is located at the end of the road and its position is 
\begin{align*}
	x^n = b_e = a_{\tilde{e}}.
\end{align*}
The first position on the new edge $x^{\doublehatexp{n}+1}$ at the time $t^{\doublehatexp{n}+1}$ is then 
\begin{align*}
	x^{\doublehatexp{n}+1} = a_{\tilde{e}} + (\tau - \doublehat{\tau}) v(\rho_{\tilde{e},0.5}^{\doublehatexp{n}}).
\end{align*}
Given the first position on the new edge, we can continue with one of our tracking algorithms using this point as initial datum.

\subsection{Choice of the Next Road} \label{chap:NextRoad}

If the exact departure time is known, the car can drive along the fastest path calculated by an extended Dijkstra Algorithm. Instead of requiring continuous input functions, which provide the travel and waiting times, we can integrate the calculation of these quantities into the algorithm. We use our tracking algorithms and the procedure described in the previous section to determine the waiting time as well as the travel time on the outgoing edges of already considered nodes. Then, we possibly update the labels in the same manner as in a static Dijkstra algorithm and obtain iteratively the path with the earliest arrival time to the destination.

To determine the aggregated path, the standard MATLAB function shortestpath\footnotemark[2] can be applied using the discrete weights corresponding to \eqref{eq:Weights}
\begin{align}
	\lambda_e &= w^{\rho} \lambda^{\rho}_e + w^{r} \lambda^{r}_v \nonumber \\
			  &=  w^{\rho} \frac{\tau h}{T \max_e\{L_e\}} \sum_{n=0}^{M} \sum_{i=0}^{N_e-1} \rho_{e,i+0.5}^n + w^{r} \frac{\tau}{T r^{max}} \sum_{n=0}^{M} r_v^n,~ \text{with}~ e \in \delta^-(v),~ e\in \cE. \label{eq:WeightsD1}
\end{align}
instead of distances. 
\footnotetext[2]{Documentation: \href{https://de.mathworks.com/help/matlab/ref/graph.shortestpath.html}{https://de.mathworks.com/help/matlab/ref/graph.shortestpath.html}. Last checked: May 22, 2019. MATLAB Version R2017a.}
\\
\\
In the last considered scenario, the current traffic densities are only received at dispersing junctions. In this case, the car starts a new shortest path calculation at every junction with more than one outgoing road using the discrete weights
\begin{align}
	\lambda_e^{\hat{n}} = w^{\rho} \lambda^{\rho}_e(t^{\hat{n}}) + w^{r} \lambda^{r}_v(t^{\hat{n}}) 
	= w^{\rho} \frac{h}{\max_e \{L_e\}} \sum_{i=0}^{N_e-1} \rho_{e,i+0.5}^{\hat{n}}  +  w^{r} \frac{r_v^{\hat{n}}}{r^{max}}, \label{eq:WeightD2}
\end{align}
depending on the time point $t^{\hat{n}}$ defined in \eqref{eq:that}, when the car is almost at the end of the considered road.

Summarizing, we have the following steps. First, the driver's trajectory is solved numerically by one of the presented tracking algorithms until the car reaches the end of the road. While the destination is not reached, the waiting time at the target node of the current edge is computed and an outgoing edge is chosen according to the considered decision criteria. If it says that the car should follow the shortest, the aggregated or the fastest path, we provide these paths already as input variables since they can be calculated in advance. Then, the car simply chooses the next edge according to the appropriate path. Only in the scenario with incomplete information on the densities, the car updates its path at dispersing junctions since the densities are only available at certain time points. Then, we proceed until the destination node is reached.

\section{Computational Results} \label{sec:Results}

This section deals with the numerical evaluation of the presented algorithms based on different types of networks. We start with linear networks to show the numerical behavior of the developed algorithms at nodes. Subsequently, the tracking algorithms are applied to compare different paths in a small network. 
Finally, a car is tracked in a large network of block structure while choosing its path based on different levels of information.

\subsection{Behavior at Nodes in Linear Networks} \label{chap:BehNodesN}

In this section, we investigate the behavior at nodes for different linear networks. First, we choose a small example such that the car faces constant traffic densities on the roads and the Godunov's method provides the exact solution to the traffic flow problem. Hence, the car drives at constant speed $\bar{v}$ and our numerical tracking algorithms calculate the position on the road exactly since the updating rule
\begin{align*}
	x^{n+1} = x^n + \tau \bar{v}
\end{align*}
provides the true values. This has the advantage that arising errors must be related to the behavior at nodes. We set the parameters $L_1 = L_2 = L_3 = 1,~ r^{max}_1 = r^{max}_4 = \infty$, $r^{max}_2 = r^{max}_3 = 0.3$ and $\mu_{1} = \mu_{2} = \mu_{3} = 0.25$. The network together with the initial data are indicated in Figure \ref{im:Nodes}.

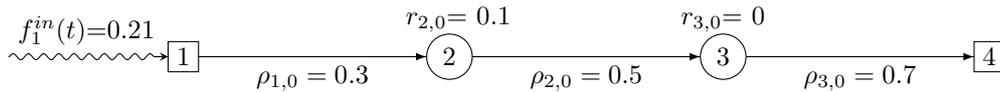
\begin{figure}[H]
	\centering
	\begin{tikzpicture}[
	decoration={
		markings,
		mark=at position 1 with {\arrow[scale=1,black]{latex}};
	}
	]
	\draw (-0.4,-0.2) rectangle (0,0.2) node [pos=.5] {1};
	
	\draw [postaction={decorate}] (0,0) -- (3,0) node [midway,below] {$\rho_{1,0} = 0.3$};
	\draw  (3.3,0) circle (3mm) node {2};
	\draw [postaction={decorate}] (3.6,0) -- (6.6,0) node [midway,below] {$\rho_{2,0} = 0.5$};
	\draw (6.9,0) circle (3mm) node {3};
	\draw [postaction={decorate}] (7.2,0) -- (10.2,0) node [midway,below] {$\rho_{3,0} = 0.7$};
	\draw (10.2,-0.2) rectangle (10.6,0.2) node [pos=.5] {4};
	
	\coordinate[label=above:$~~r_{2,0} {=} ~0.1$] ()  at (3.3,0.2);
	
	\coordinate[label=above:$r_{3,0} {=} ~0$] ()  at (6.9,0.2);

	\draw [-stealth,decoration={snake, amplitude = .4mm, segment length = 2mm, post length=0.9mm},decorate] (-2.5,0) -- (-0.4,0);
	\coordinate[label=right:$f_1^{in}(t) {=} 0.21$] ()  at (-2.5,0.35);
	\end{tikzpicture}
	\caption{Linear network with initial data}
	\label{im:Nodes}
\end{figure}

The car starts at time $t_1^*=0$ at the beginning of the first road $x_1^*=0$. Although the densities on the roads are different, no wave occurs at a junction at the beginning of the simulation. This is due to the balance property of the buffers. In Figure \ref{dia:Characteristics}, the characteristics together with the car trajectory as well as the buffer loads are depicted. At the second junction, the buffer decreases until it is completely empty. Then, a shock with positive speed emerges. However, the car is not affected since it has already passed the node at this time. The buffer at the third junction increases but does not reach $r^{max}_3 = 0.3$ within $T=8$. 

\begin{figure}[h]
	\centering
	\begin{minipage}[b]{0.48\textwidth}
		\centering
		\includegraphics[width=\textwidth]{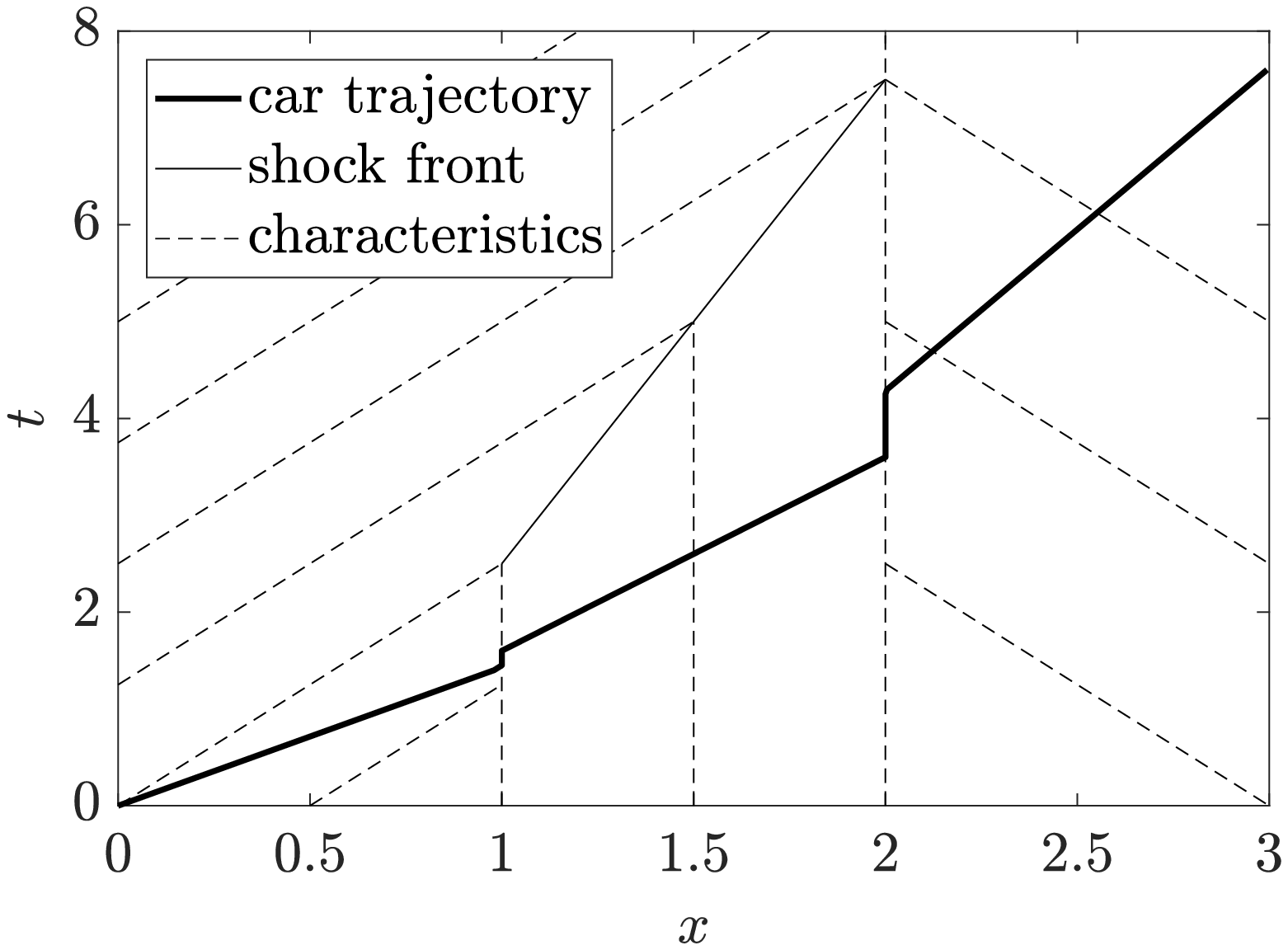}
	\end{minipage}
	\quad
	\begin{minipage}[b]{0.48\textwidth}
		\centering
		\includegraphics[width=\textwidth]{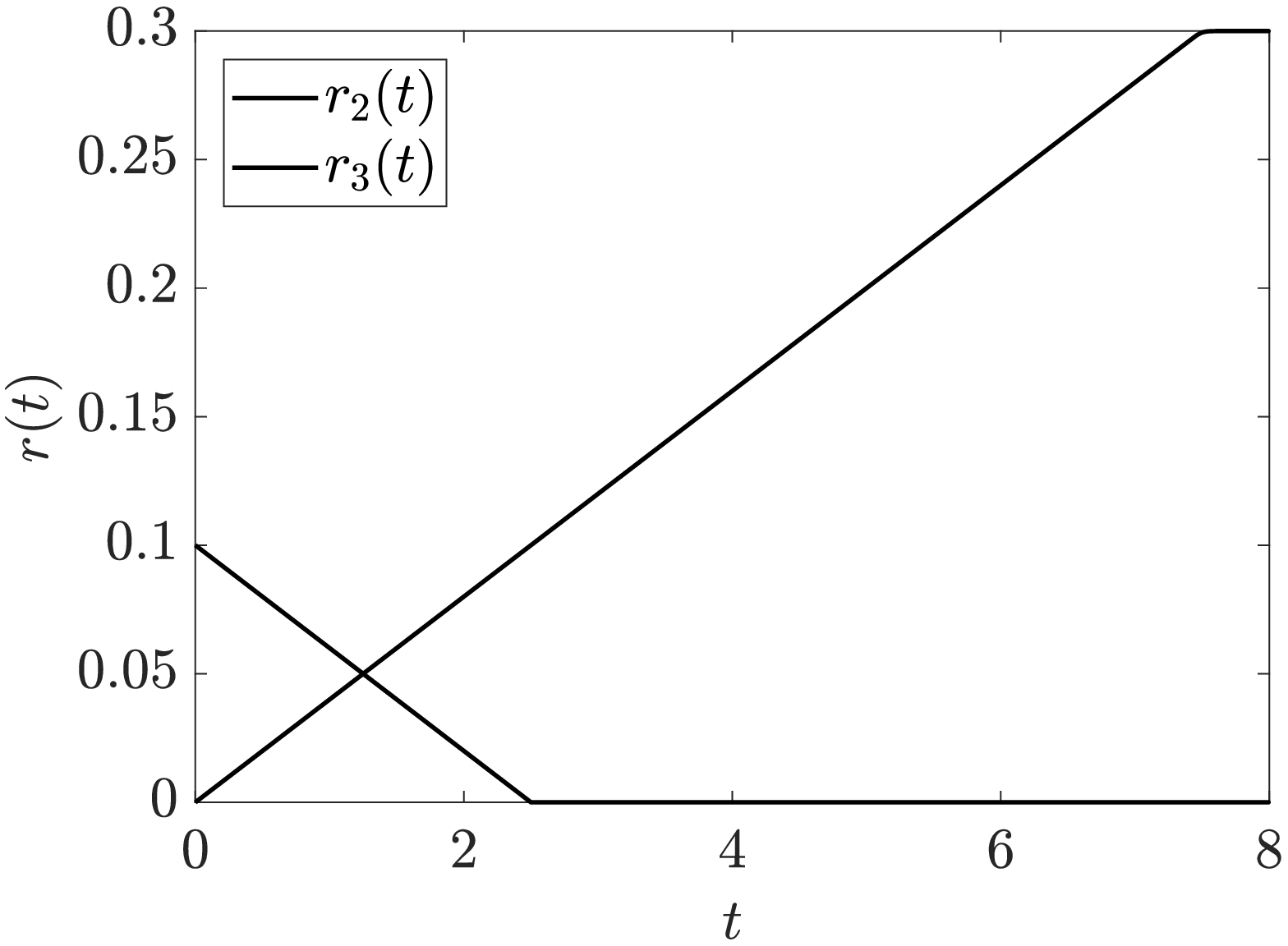}
	\end{minipage}
	\caption{Car trajectory and characteristics (left) as well as buffer loads (right)}
	\label{dia:Characteristics}
\end{figure}

For this small example, we are able to derive the analytical solution and hence the exact formulation of the car's trajectory can be described by
\begin{align*}
x(t) = \begin{cases}
0.7t & \text{if } ~0 \leq t \leq \tfrac{10}{7}, \\
1    & \text{if } ~ \tfrac{10}{7} \leq t \leq \tfrac{8}{5}, \\
1 + 0.5(t-1.6) & \text{if } ~ \tfrac{8}{5} \leq t \leq \tfrac{18}{5}, \\
2 & \text{if } ~ \tfrac{18}{5} \leq t \leq \tfrac{30}{7}, \\
2+0.3(t-\tfrac{30}{7}) & \text{if } ~ \tfrac{30}{7} \leq t \leq \tfrac{160}{21}. \\
\end{cases}
\end{align*}
The waiting times are $wt_2(\tfrac{10}{7}) \approx 0.17 $ at the second node and $wt_3(3.6) \approx 0.69 $ at the third one. Using the step sizes $h=0.1$ and $\tau=0.05$, we obtain for both algorithms a truncation error of 2.35e-14 which is in the range of the computational accuracy of MATLAB and the error is dominated by the round-off errors.
\\
\\
In a second example, we aim to study a non-linear case. More precisely, we compare two similar networks. In the first one, we consider a single road with length $L_1=2$ whereas the second network consists of two roads, $L_1=L_2=1$, with an empty buffer in between. Both settings are depicted in Figure \ref{im:Setting12}. 

\begin{figure}[H] 
	\centering
	\begin{tikzpicture}[decoration={markings,mark=at position 1 with {\arrow[scale=1,black]{latex}};}, scale=0.98]
	\coordinate[label=right:Setting 1:] ()  at (-2.5,2.6);
	\draw [-stealth,decoration={snake, amplitude = .4mm, segment length = 2mm, post length=0.9mm},decorate] (-2.5,1.5) -- (-0.4,1.5);
	\coordinate[label=right:$f_1^{in}(t) {=} 0.24$] ()  at (-2.5,1.85);
	\draw (-0.4,1.3) rectangle (0,1.7) node [pos=.5] {1};
	\draw [postaction={decorate}] (0,1.5) -- (4.5,1.5);
	\draw (4.5,1.3) rectangle (4.9,1.7) node [pos=.5] {2};
	
	\coordinate[label=right:Setting 2:] ()  at (5.4,2.6);
	\draw [-stealth,decoration={snake, amplitude = .4mm, segment length = 2mm, post length=0.9mm},decorate] (5.4,1.5) -- (7.5,1.5);
	\coordinate[label=right:$f_1^{in}(t) {=} 0.24$] ()  at (5.4,1.85);
	\draw (7.5,1.3) rectangle (7.9,1.7) node [pos=.5] {1};
	\draw [postaction={decorate}] (7.9,1.5) -- (9.85,1.5);
	\draw  (10.15,1.5) circle (3mm) node {2};
	\draw [postaction={decorate}] (10.45,1.5) -- (12.4,1.5);
	\draw (12.4,1.3) rectangle (12.8,1.7) node [pos=.5] {3};
	
	\draw [dashed] (1.45,2) -- (1.45,1);
	\draw [dashed] (2.25,2) -- (2.25,1);
	\draw [dashed] (3.05,2) -- (3.05,1);
	
	\draw [dashed] (9.35,2) -- (9.35,1);
	\draw [dashed] (10.15,2) -- (10.15,1.8);
	\draw [dashed] (10.15,1.1) -- (10.15,1);
	\draw [dashed] (10.95,2) -- (10.95,1);
	
	\coordinate[label=above:$\mathcal{C}_1$] ()  at (1.85,1.7);
	\coordinate[label=above:$\mathcal{C}_2$] ()  at (2.65,1.7);
	\coordinate[label=above:$x {=} 1$] ()  at (2.25,0.6);
	
	\coordinate[label=above:$\mathcal{C}_1$] ()  at (9.75,1.7);
	\coordinate[label=above:$\mathcal{C}_2$] ()  at (10.55,1.7);	
	\coordinate[label=above:$x {=} 1$] ()  at (10.15,0.6);
	
	\end{tikzpicture}
	\caption{Network with single road (Setting 1) and network with a buffer (Setting 2)}
	\label{im:Setting12}
\end{figure}
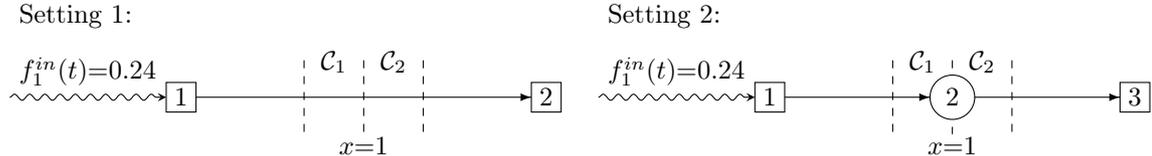

Now, we choose initial data such that a rarefaction wave travels through the junction and the car can pass the buffer immediately. In fact, we set
\begin{align*}
	\rho_{1,0}(x) = \begin{cases}
						0.4 & \text{if } ~x < 0.5\\
						0.2 & \text{if } ~x > 0.5
					\end{cases},~~ 
	\rho_{2,0} = 0.2 ~~\text{and}~~ r_{1,0} = r_{2,0} = r_{3,0} = 0.
\end{align*}
The remaining parameters are fixed to $r^{max}_1 = r^{max}_2 = r^{max}_3 = \infty$ and $\mu_{1} = \mu_{2} = 0.25$. Assuming that the car starts at the beginning of the first edge $x_1^*=0$ at time $t_1^*=0$, the exact solution for the car's path up to the end of the single road is given by
\begin{align*}
	x(t) = \begin{cases}
				0.6t & \text{if } ~0 \leq t<1.25,\\
				t-\tfrac{2 \sqrt{5}}{5} \sqrt{t}+0.5 & \text{if } ~ 1.25 \leq t \leq \tfrac{19+2\sqrt{34}}{10}.
		   \end{cases}
\end{align*}
In the second setting, the buffer remains empty and the total flow traverses the junction without being kept back. Consequently, we assume the same car trajectory as in the first setting. For step sizes $h=0.1 \cdot 2^{-n},~n=0,2,4,6$, we calculate the truncation error 
\begin{align*}
	\epsilon = \max_n \{ |x(t^n) - x^n| \},~t^n \in [0,T].
\end{align*}
The results are summarized in Table \ref{tab:Setting12}.

\begin{table}[h]
	\centering
	\setlength{\tabcolsep}{17pt}
	\begin{tabular}{ |c| |c|c|c|c|}
		\hline
		& \multicolumn{2}{c|}{Trunc. Err. (Setting 1)} & \multicolumn{2}{c|}{Trunc. Err. (Setting 2)}  \\
		\hline
		$n$ & Naive Algo & Complex Algo & Naive Algo & Complex Algo \\
		\hline
		\hline
		0 & 3.59e-02 & 4.14e-02 & 3.67e-02 & 4.17e-02 \\
		2 & 1.74e-02 & 1.83e-02 & 1.74e-02 & 1.84e-02 \\
		4 & 7.04e-03 & 7.29e-03 & 7.05e-03 & 7.30e-03 \\
		6 & 2.51e-03 & 2.58e-03 & 2.51e-03 & 2.58e-03 \\
		\hline
	\end{tabular}
	\caption{Truncation error $\epsilon$ for the settings depicted in Figure \ref{im:Setting12}}
	\label{tab:Setting12}
\end{table}

We observe that the resulting outcomes of the naive and complex algorithm are comparable. In both settings, the truncation error declines proportionally to the step sizes. The small difference between Setting 1 and 2 is due to behavior at $x=1$. In the first setting, the car can pass this point without any further calculations and at a speed corresponding to the density in the cell $\mathcal{C}_1$. In the second setting, the car drives exactly up to the end of the first road and checks whether the buffer is empty or not. Then, it starts on the second road at a speed corresponding to cell $\mathcal{C}_2$, which is higher than the speed corresponding to $\mathcal{C}_1$ since the traffic flow is described by a rarefaction. However, Table \ref{tab:Setting12} shows that the difference between both settings tends to zero for decreasing step sizes. In both settings, the error is reasonably small and evidence of convergence of the algorithms is observed.

\subsection{Comparison of Different Paths} \label{chap:Paths}
In this section, we consider a small network to study the different decision criteria of the car to choose the next edge. The network is depicted in Figure \ref{im:NetDec}. All roads are of length $L_e=1$.

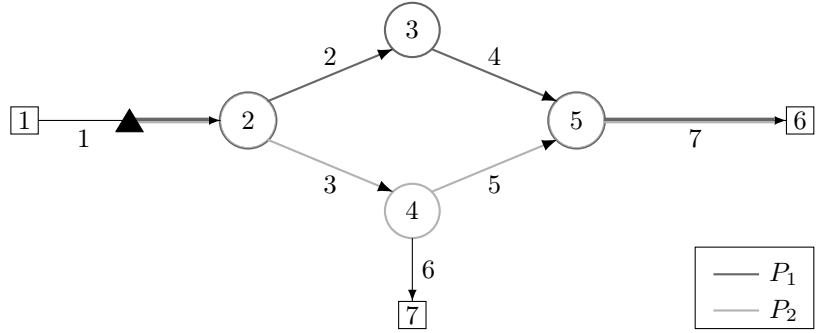
\begin{figure}[h] 
	\centering
	\begin{tikzpicture}[decoration={markings,mark=at position 1 with {\arrow[scale=1.1,black]{latex}};},scale=1.2]
	
	\draw (-0.3,-0.15) rectangle (0,0.15) node [pos=.5] {1};
	\draw [gray!120,thick] (2.3,0) circle (3.1mm);
	\draw [gray!60] (2.3,0) circle (3mm) node [black] {2};
	
	\draw [gray!120,thick] (4.1,1) circle (3mm) node [black] {3};
	\draw [gray!60,thick] (4.1,-1) circle (3mm) node [black] {4};
	\draw [gray!120,thick] (5.9,0) circle (3.1mm);
	\draw [gray!60] (5.9,0) circle (3mm) node [black] {5};
	\draw (8.2,-0.15) rectangle (8.5,0.15) node [pos=.5] {6};
	\draw (3.95,-2) rectangle (4.25,-2.3) node [pos=.5] {7};
	
	\draw [postaction={decorate}] (0,0) -- (2,0) node[draw=none,fill=none,near start,below] {1};	
	\draw [thick,gray!120,postaction={decorate}] (2.3+0.212,0.212) -- (4.1-0.212,1-0.212) node[black,draw=none,fill=none,midway,above] {2};
	\draw [gray!60,thick,postaction={decorate}] (2.3+0.212,-0.212) -- (4.1-0.212,-1+0.212) node[black,draw=none,fill=none,midway,below] {3};	
	\draw [thick,gray!120,postaction={decorate}] (4.1+0.212,1-0.212) -- (5.9-0.212,0.212) node[black,draw=none,fill=none,midway,above] {4};
	\draw [gray!60,thick,postaction={decorate}] (4.1+0.212,-1+0.212) -- (5.9-0.212,-0.212) node[black,draw=none,fill=none,midway,below] {5};
	\draw [postaction={decorate}] (4.1,-1.3) -- (4.1,-2) node[draw=none,fill=none,midway,right] {6};	
	\draw [postaction={decorate}] (6.2,0) -- (8.2,0) node[draw=none,fill=none,midway,below] {7};
	
	\draw [gray!120,line width=0.5mm] (1,0.01) -- (1.87,0.01);
	\draw [gray!120,line width=0.5mm] (6.2,0.01) -- (8.07,0.01);
	
	\draw [gray!60,thick] (1,-0.02) -- (1.87,-0.02);
	\draw [gray!60,thick] (6.2,-0.02) -- (8.07,-0.02);
	
	\draw[fill=black] (0.85,-0.125) -- (1.15,-0.125) -- (1,0.125) -- (0.85,-0.125);
	
	\draw (7.2,-1.4) rectangle (8.5,-2.3);
	\begin{scope}[shift={(7.4,-2.1)}]  
	\draw [thick,gray!60](0,0) -- plot[] (0.25,0) -- (0.5,0) 
	node[black,right]{$P_2$};
	\draw [thick,gray!120,yshift=0.4cm](0,0) -- plot[] (0.25,0) -- (0.5,0) 
	node[black,right]{$P_1$};
	\end{scope}
	
	\end{tikzpicture}
	\caption{Network together with the car's starting position (triangle) and the two possible paths $P_1$ and $P_2$ to reach the destination node $6$}
	\label{im:NetDec}
\end{figure}

The initial data are chosen such that the all fluxes are constant as long as the buffer load at junction 4 decreases. Therefore, we set a constant inflow $f^{in}_1(t)=0.2$ and the following initial densities
\begin{align*}
	\rho_{1,0} &= \tfrac{5-\sqrt{5}}{10},~ \rho_{2,0} = \tfrac{5-\sqrt{13}}{10},~ \rho_{3,0} = \tfrac{5-\sqrt{17}}{10},~ \rho_{4,0} = \tfrac{5-\sqrt{13}}{10}, \\
	\rho_{5,0} &= \tfrac{5-\sqrt{15}}{10},~\rho_{6,0} = \tfrac{5-\sqrt{10}}{10},~ \rho_{7,0} = \tfrac{5-\sqrt{3}}{10}.
\end{align*} 
All buffers, except at junction 4, are initially empty:
\begin{align*}
	r_{1,0} = r_{2,0} = r_{3,0} = r_{5,0} = r_{6,0} = 0 ~~\text{and}~~ r_{4,0}=0.5.
\end{align*}
The maximum buffer capacities are $r_v^{max}=0.5$, $v=2,3,4,5$, and infinity at the incoming and outgoing nodes. Additionally, we choose $\mu_v=0.25$ and distribution rates in the way that more cars prefer road 2 to road 3 and road 6 to road 5. More precisely, 
\begin{align*}
	\alpha_{2,1} = 0.6,~ \alpha_{3,1} = 0.4,~ \alpha_{5,3} = 0.4~ \text{and} ~\alpha_{6,3} = 0.6.
\end{align*}
The right-of-way priorities are not fixed but time-dependent on the demand and we consider a time horizon $T=15$.

We look at a car starting at different times $t_1^* \in \{0,5\}$ at the position $x_1^*=0.5$ (triangle) with destination node $6$. In Figure \ref{im:NetDec}, the two possible paths $P_1$ and $P_2$ are depicted. Figure \ref{im:CarTrajSmall} shows the distances traveled depending on the time calculated by the complex tracking algorithm with step size $h=0.01$. If the car starts at $t_1^*=0$, it must wait at junction $4$ since the buffer is not empty, $wt_{tot}^{P_2} \approx 0.78$. For this reason, the arrival times vary although the slope of the trajectories do not differ strongly. When the buffer is completely empty, the distribution rates ensure that the traffic is denser on the roads $e \in \{2,4\}$ than on the roads $e \in \{3,5\}$. This effect makes $P_2$ preferable when the car starts at $t_1^*=5$.

\begin{figure}[h]
	\centering
	\includegraphics[width=\textwidth]{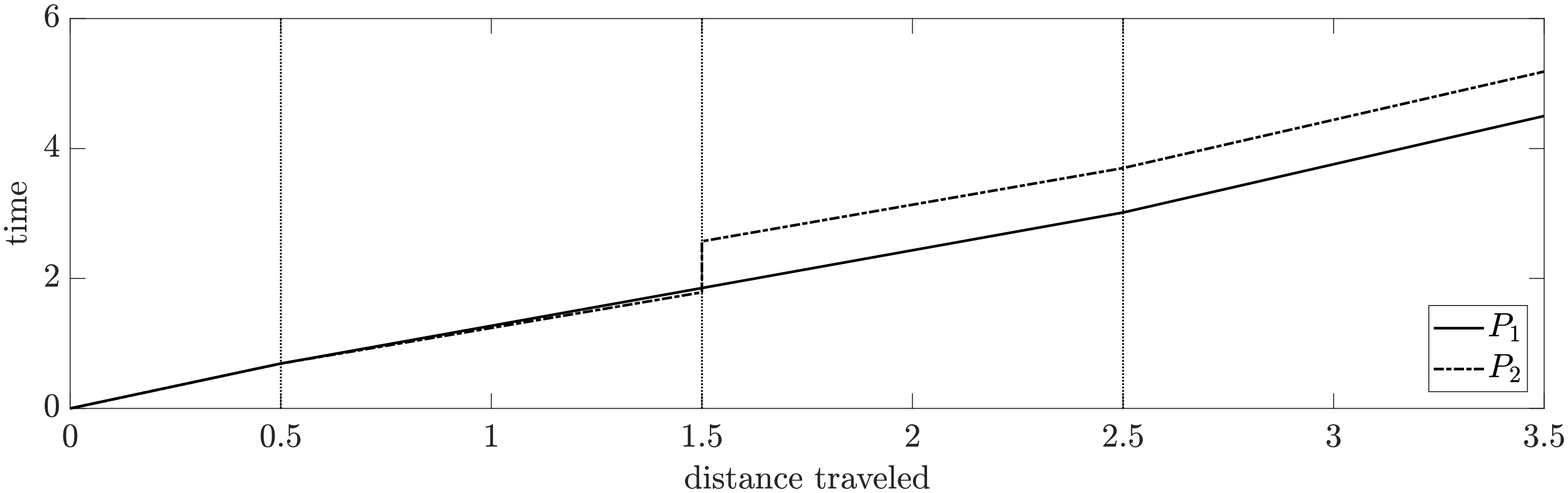}
	\includegraphics[width=\textwidth]{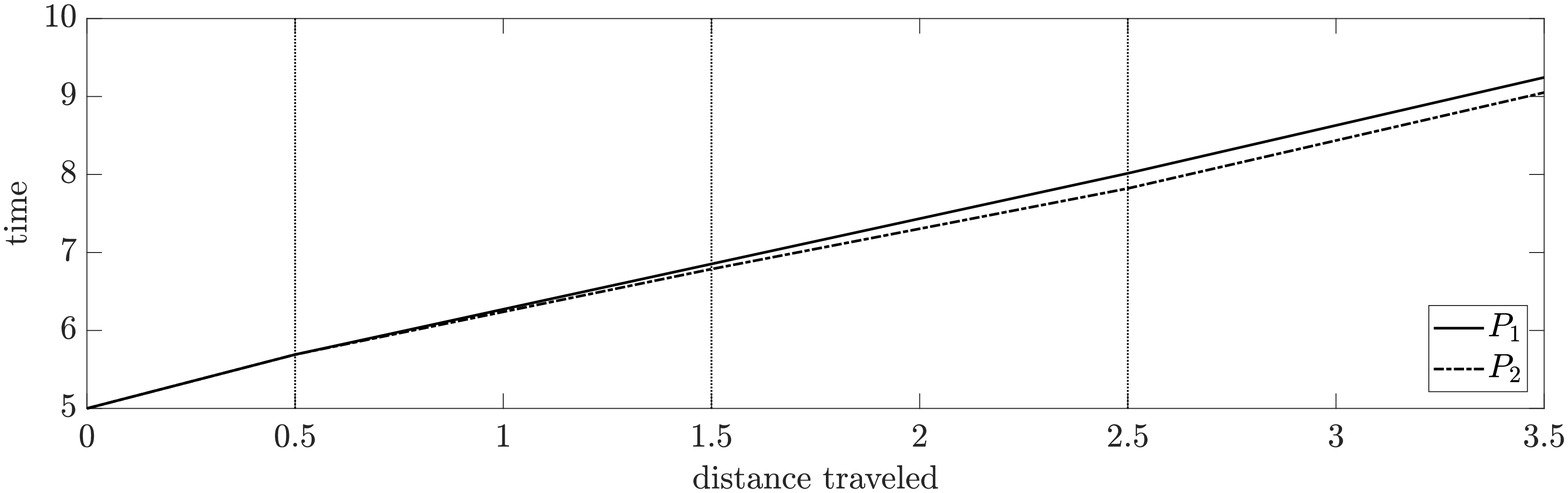}
	\caption{Distance traveled of a car following path $P_1$ and $P_2$ starting at the position $x_1^*=0.5$ at $t_1^*=0$ (top) and at $t_1^*=5$ (bottom)}
	\label{im:CarTrajSmall}
\end{figure}

Now, we study the choice of the path based on the different decision criteria introduced in Subection \ref{sec:CarChoice}. Since both paths are of length $L=4.5$, the cars are indifferent whenever their decision is completely based on the path length. The application of the time-dependent Dijkstra algorithm with complete information provides the optimum path related to the arrival time which is $P_1$ when the car starts at $t_1^*=0$ and $P_2$ when it starts at $t_1^*=5$. The same result is obtained assuming that accurate information on the densities is only available when the car is at a dispersing junction and when we successively update the path using the edge weights \eqref{eq:WeightD2} with $w^{\rho}=w^{r}=0.5$. Without knowing the exact departure time, the aggregated measure \eqref{eq:WeightsD1} with $w^{\rho}=w^{r}=0.5$ provides the path $P_2$. It includes the fact that from initial time $t_1^*>1$, following $P_2$ leads to an earlier arrival time than $P_1$.

\subsection{Tracking a Car in a Block Network} \label{chap:Block}

Now, we consider a bigger traffic network arranged in block structure and with road lengths of $L_e \in \{1,2\}$ as depicted in Figure \ref{im:BlockNet}. The network has a single inflow node with $f_1^{in}(t)=f(0.3)$ and two outflow nodes. We prescribe equal initial densities $\rho_{e,0}=0.3$ on all roads $e \in \cE$ and  $r_{v,0}=0$ at each junction $v \in \cV$. The maximum capacities at junctions inside the network are $r_v^{max}=0.3$ and the in- and outflow per time is restricted to $\mu_v=0.25$. The distribution rates are set to $\alpha_{i,j} = 0.5$ at all dispersing junctions and the priority parameters are chosen according to \eqref{eq:c1c2}. 

\begin{figure}[h]
	\centering
	\includegraphics[width=\textwidth]{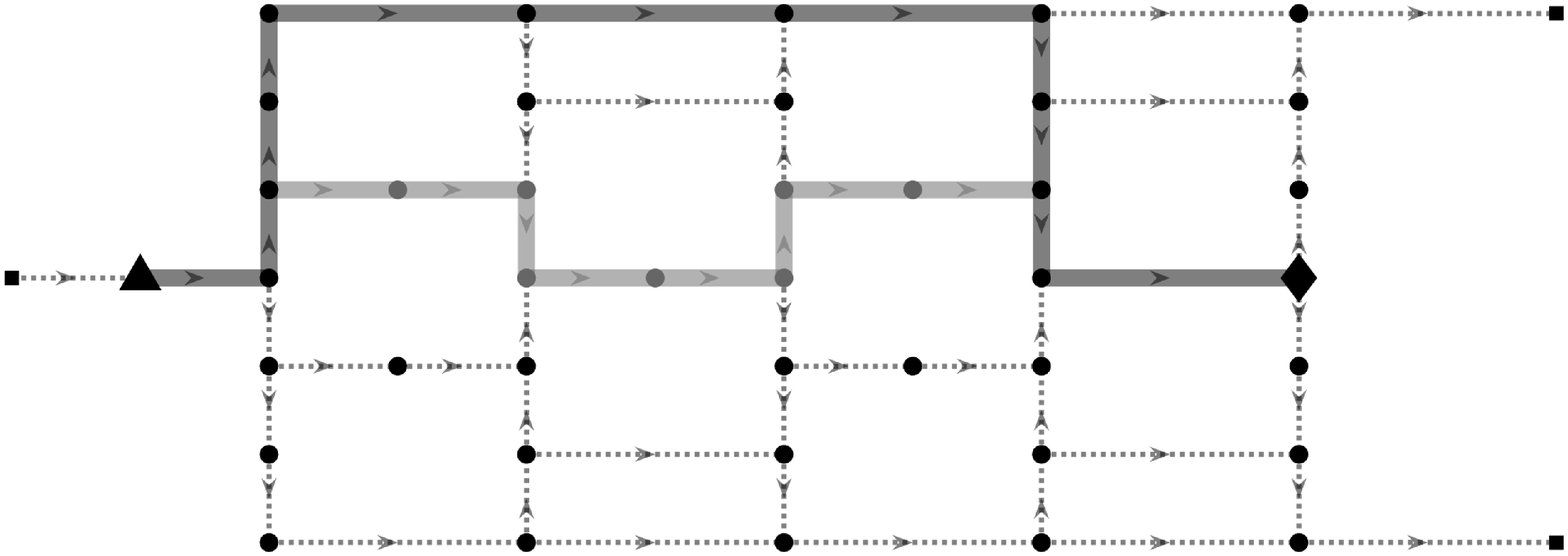}
	\caption{Block network with start (triangle) and destination node (diamond), shortest path (light gray) and fastest path (dark gray)}
	\label{im:BlockNet}
\end{figure}

The car starts at time $t_1^*=0$ at the position of triangle and wants to reach the destination marked by the diamond in Figure \ref{im:BlockNet}. Note that the network is symmetric with an axis of reflection that connects the triangle to the diamond. Hence, for our chosen initial data, we have two shortest and two fastest paths. One shortest path is visualized by the light gray line in Figure \ref{im:BlockNet}. It has a total length of $L^{short} = 13$ and the complex algorithm with $h_e=0.01$ yields an arrival time of $32.92$. However, a faster path can be found which is illustrated by the thick black line. Its total length $L^{fast}=15$ is higher but the car arrives earlier at time $t = 28.19$. 
 
\begin{figure}[h]
	\centering
	\includegraphics[width=\textwidth]{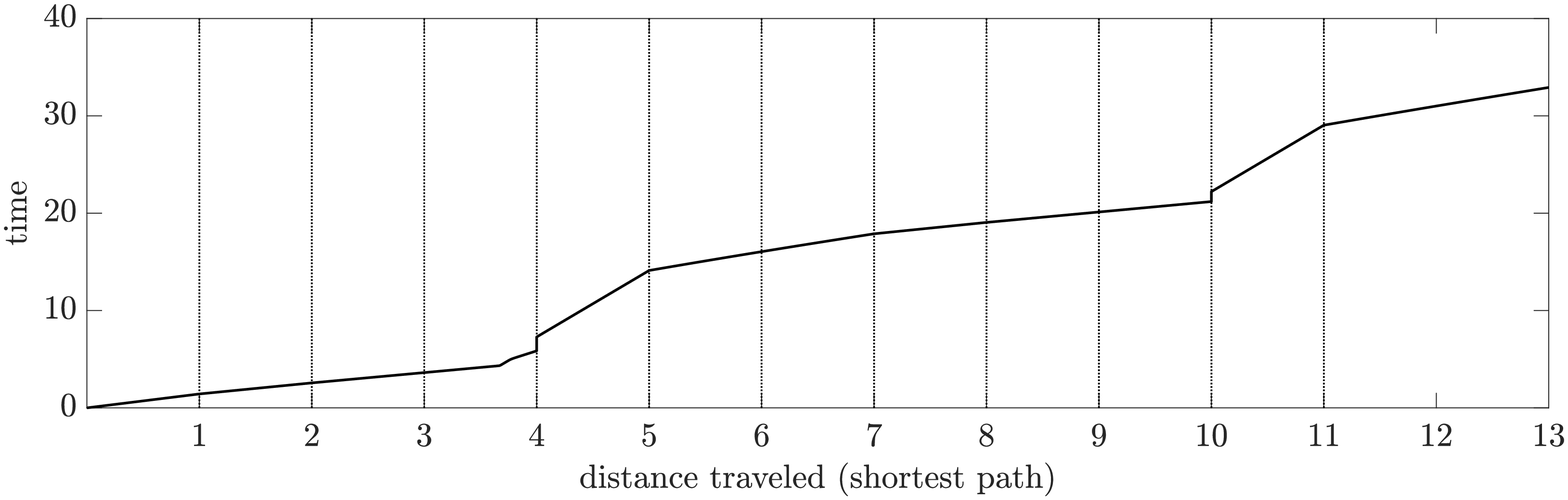}
	\includegraphics[width=\textwidth]{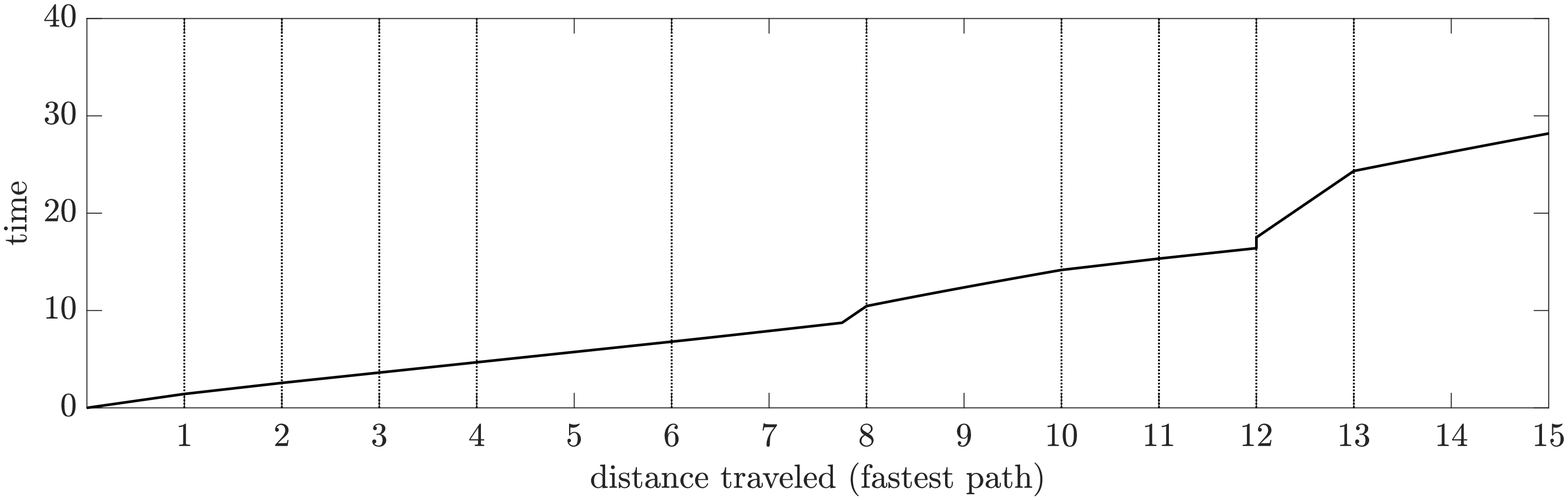}
	\caption{Distance traveled along the shortest and fastest path as depicted in Figure \ref{im:BlockNet}, passed nodes are indicated by vertical lines}
	\label{im:BlockTraj}
\end{figure}

In Figure \ref{im:BlockTraj}, the distance traveled depending on the time is depicted for two cars following the shortest and fastest path, respectively. Since the paths overlap at the beginning and at the end, it is not surprising that the slopes are similar in these areas. However, the slope in case of the shortest path is steeper in the middle section due to the higher traffic density in this part of the network. Additionally, the total waiting times differ. By following the shortest path, the car must wait at two junctions which leads to a higher waiting time of $wt^{short}_{tot} \approx 2.5$ instead of $wt^{fast}_{tot} \approx 1.1$ following the fastest path. 

The algorithm based on complete information finds the fastest path. However, for $w^{\rho} = w^{r} = 0.5$, the algorithms aiming to find an optimum path for situations with imperfect information, lead to the shortest path but do not provide the earliest arrival time.

\section{Conclusion} \label{chap:Conclusion} 

We considered the traffic flow model with buffers at junctions introduced in \cite{Herty.2009}. We particularly presented a new approach to define the buffer demand preventing negative buffer loads. The link between the original and the new formulation is described in Lemma \ref{lemma:dB}. 

Built on the concepts of \cite{Bretti.2008}, we successively extended two car path algorithms to networks with bounded buffers. Their numerical performance was evaluated by considering several test networks. Moreover, we investigated the possible paths and the corresponding arrival times at a given destination resulting from different choices to take the next road at junctions. 

Future work may include the investigation of car path tracking in second order traffic flow models on networks.

\section*{Acknowledgments}
This work was financially supported by BMBF project ENets (05M18VMA) and the DFG grant No. GO 1920/4-1.
\bibliographystyle{siam}
\bibliography{references}

\begin{thebibliography}{10}

\bibitem{Bretti.2008}
{\sc G.~Bretti and B.~Piccoli}, {\em A tracking algorithm for car paths on road
  networks}, SIAM Journal on Applied Dynamical Systems, 7 (2008), pp.~510--531.

\bibitem{Colombo.2003b}
{\sc R.~M. Colombo and A.~Marson}, {\em Conservation laws and o.d.e.s. a
  traffic problem}, in Hyperbolic problems, T.~Y. Hou and E.~Tadmor, eds.,
  Springer, Berlin, Heidelberg, 2003, pp.~455--461.

\bibitem{Colombo.2003}
\leavevmode\vrule height 2pt depth -1.6pt width 23pt, {\em A h{\"o}lder
  continuous ode related to traffic flow}, Proceedings of the Royal Society of
  Edinburgh: Section A Mathematics, 133 (2003), pp.~759--772.

\bibitem{Cooke.1966}
{\sc K.~L. Cooke and E.~Halsey}, {\em The shortest route through a network with
  time-dependent internodal transit times}, Journal of Mathematical Analysis
  and Applications, 14 (1966), pp.~493--498.

\bibitem{DApice.2010}
{\sc C.~D'Apice, S.~G{\"o}ttlich, M.~Herty, and B.~Piccoli}, {\em Modeling,
  simulation, and optimization of supply chains: A continuous approach},
  Society for Industrial and Applied Mathematics, Philadelphia, 2010.

\bibitem{DelleMonache.2014}
{\sc M.~L. {Delle Monache}, J.~Reilly, S.~Samaranayake, W.~Krichene, P.~Goatin,
  and A.~Bayen}, {\em A pde-ode model for a junction with ramp buffer}, SIAM
  Journal on Applied Mathematics, 74 (2014), pp.~22--39.

\bibitem{Dijkstra.1959}
{\sc E.~W. Dijkstra}, {\em A note on two problems in connexion with graphs},
  Numerische Mathematik, 1 (1959), pp.~269--271.

\bibitem{Dreyfus.1969}
{\sc S.~E. Dreyfus}, {\em An appraisal of some shortest-path algorithms},
  Operations Research, 17 (1969), pp.~395--412.

\bibitem{Filippov.1988}
{\sc A.~F. Filippov}, {\em Differential equations with discontinuous righthand
  sides}, vol.~18 of Mathematics and its applications Soviet series, Kluwer,
  Dordrecht, 1988.

\bibitem{Garavello.2016}
{\sc M.~Garavello, K.~Han, and B.~Piccoli}, {\em Models for vehicular traffic
  on networks}, vol.~9 of AIMS series on applied mathematics, {American
  Institute of Mathematical Sciences}, Springfield, MO, 2016.

\bibitem{Garavello.2006}
{\sc M.~Garavello and B.~Piccoli}, {\em Traffic flow on networks: Conservation
  law models}, vol.~1 of AIMS series on applied mathematics, {American Inst. of
  Mathematical Sciences}, Springfield, Mo., 2006.

\bibitem{Goatin.2012}
{\sc P.~Goatin and M.~Garavello}, {\em The cauchy problem at a node with
  buffer}, Discrete and Continuous Dynamical Systems-Series A, 32 (2012),
  pp.~1915--1938.

\bibitem{Gottlich.2016}
{\sc S.~G{\"o}ttlich and C.~Harter}, {\em A weakly coupled model of
  differential equations for thief tracking}, Networks and Heterogeneous Media,
  11 (2016), pp.~447--469.

\bibitem{Herty.2009}
{\sc M.~Herty, J.-P. Lebacque, and S.~Moutari}, {\em A novel model for
  intersections of vehicular traffic flow}, Networks and Heterogeneous Media, 4
  (2009), pp.~813--826.

\bibitem{Lighthill.1955}
{\sc M.~J. Lighthill and G.~B. Whitham}, {\em On kinematic waves. ii. a theory
  of traffic flow on long crowded roads}, Proceedings of the Royal Society A:
  Mathematical, Physical and Engineering Sciences, 229 (1955), pp.~317--345.

\bibitem{Piccoli.2006}
{\sc B.~Piccoli, R.~Natalini, and G.~Bretti}, {\em Numerical approximations of
  a traffic flow model on networks}, Networks and Heterogeneous Media, 1
  (2006), pp.~57--84.

\bibitem{Richards.1956}
{\sc P.~I. Richards}, {\em Shock waves on the highway}, Operations Research, 4
  (1956), pp.~42--51.

\bibitem{Treiber.2013}
{\sc M.~Treiber and A.~Kesting}, {\em Traffic Flow Dynamics: Data, Models and
  Simulation}, Springer, Berlin, Heidelberg, 2013.

\end{thebibliography}
\end{document}